\numberwithin{equation}{section}
\theoremstyle{plain}
\newtheorem{thm}{Theorem}
\newtheorem{lem}{Lemma}
\newtheorem{assu}{Assumption}
\newenvironment{thmbis}[1]
 {%
 \addtocounter{thm}{-1}%
 \begin{thm}}
 {\end{thm}}
\theoremstyle{definition}
\theoremstyle{remark}
\newtheorem*{rem}{Remark}
\newcommand{\bR}{\mathbb {R}}
\newcommand{\transpose}{^{\operatorname{T}}}
\newcommand{\rmd}{\mathrm{d}}
\title{Convergence of the Deep BSDE Method \\for Coupled FBSDEs}
\author{Jiequn Han \thanks{jiequnh@princeton.edu}}
\author[2]{Jihao Long \thanks{jihaol@pku.edu.cn}}
\affil[1]{Department of Mathematics, Princeton University, Princeton, NJ 08544, USA}
\affil[2]{School of Mathematical Sciences, Peking University, Beijing 100871, China}
\date{}
\begin{document}
\maketitle

\begin{abstract}
The recently proposed numerical algorithm, deep BSDE method, has shown remarkable performance in solving high-dimensional forward-backward stochastic differential equations (FBSDEs) and parabolic partial differential equations (PDEs).
This article lays a theoretical foundation for the deep BSDE method in the general case of coupled FBSDEs. In particular, a posteriori error estimation of the solution is provided and it is proved that the error converges to zero given the universal approximation capability of neural networks.
Numerical results are presented to demonstrate the accuracy of the analyzed algorithm in solving high-dimensional coupled FBSDEs.
\end{abstract}

\section{Introduction}
Forward-backward stochastic differential equations (FBSDEs) and partial differential equations (PDEs) of parabolic type have found numerous applications in stochastic control, finance, physics, etc., as a ubiquitous modeling tool. In most situations encountered in practice the equations cannot be solved analytically but require certain numerical algorithms to provide approximate solutions. On the one hand, the dominant choices of numerical algorithms for PDEs are mesh-based methods, such as finite differences, finite elements, etc. On the other hand, FBSDEs can be tackled directly through probabilistic means, with appropriate methods for the approximation of conditional expectation. Since these two kinds of equations are intimately connected through the nonlinear Feynman--Kac formula~\cite{pardoux1992backward}, the algorithms designed for one kind of equation can often be used to solve another one.

However, the aforementioned numerical algorithms become more and more difficult, if not impossible, when the dimension increases.
They are doomed to run into the so-called ``curse of dimensionality'' \cite{bellman1957dynamic} when the dimension is high, namely, the computational complexity grows exponentially as the dimension grows. 
The classical mesh-based algorithms for PDEs require a mesh of size $O(N^d)$. 
The simulation of FBSDEs faces a similar difficulty in the general nonlinear cases, due to the need to compute conditional expectation in high dimension.
The conventional methods, including the least squares regression~\cite{bender2012least}, Malliavin approach~\cite{bouchard2004malliavin}, and kernel regression~\cite{bouchard2004discrete}, are all of exponential complexity. There are a limited number of cases where practical high-dimensional algorithms are available. For example, in the linear case, Feynman--Kac formula and Monte Carlo simulation together provide an efficient approach to solving PDEs and associated BSDEs numerically. 
In addition, methods based on the branching diffusion process \cite{henry2012counterparty, henry2019branching} and  multilevel Picard iteration \cite{hutzenthaler2016multilevel,weinan2019multilevel,hutzenthaler2018overcoming} overcome the curse of dimensionality in their considered settings. 
We refer~\cite{weinan2019multilevel} for the detailed discussion on the complexity of the algorithms mentioned above.
Overall there is no numerical algorithm in literature so far proved to overcome the curse of dimensionality for general quasilinear parabolic PDEs and the corresponding FBSDEs.

A recently developed algorithm, called the deep BSDE method~\cite{han2018solving,weinan2017deep}, has shown astonishing power in solving general high-dimensional FBSDEs and parabolic PDEs~\cite{beck2017machine,han2019deep,han2020solving}. In contrast to conventional methods, the deep BSDE method employs neural networks to approximate unknown gradients and reformulates the original equation-solving problem into a stochastic optimization problem. Thanks to the universal approximation capability and parsimonious parameterization of neural networks, in practice the objective function can be effectively optimized in high-dimensional cases, and the function values of interests are obtained quite accurately.

The deep BSDE method was initially proposed for decoupled FBSDEs.
In this paper, we extend the method to deal with coupled FBSDEs and a broader class of quasilinear parabolic PDEs. Furthermore, we present an error analysis of the proposed scheme, including decoupled FBSDEs as a special case. 
Our theoretical result consists of two theorems.  Theorem~\ref{thm:main1} provides a posteriori error estimation of the deep BSDE method. As long as the objective function is optimized to be close to zero under fine time discretization, the approximate solution is close to the true solution. In other words, in practice, the accuracy of the numerical solution is effectively indicated by the value of the objective function.
Theorem~\ref{thm:main2} shows that such a situation is attainable, by relating the infimum of the objective function to the expression ability of neural networks. As an implication of the universal approximation property (in the $L^2$ sense), there exist neural networks with suitable parameters such that the obtained numerical solution is approximately accurate.
To the best of our knowledge, this is the first theoretical result of the deep BSDE method for solving FBSDEs and parabolic PDEs. 
Although our numerical algorithm is based on neural networks, the theoretical result provided here is equally applicable to the algorithms based on other forms of function approximations.

The article is organized as follows. In section 2, we precisely state our numerical scheme for coupled FBSDEs and quasilinear parabolic PDEs and give the main theoretical results of the proposed numerical scheme.
In section 3, the basic assumptions and some useful results from the literature are given for later use. The proofs of the two main theorems are provided in section 4 and section 5, respectively.
Some numerical experiments with the proposed scheme are presented in section 6.

\section{A Numerical Scheme for Coupled FBSDEs and Main Results}

Let $T \in (0, +\infty)$ be the terminal time, $(\Omega,\mathbb{F},\{\mathcal{F}_t\}_{0 \le t \le T}, \mathbb{P})$ be a  filtered probability space equipped with a $d$-dimensional standard Brownian motion $\{W_t\}_{0 \le t \le T}$ starting from $0$. $\xi$ is a square-integrable random variable independent of $\{W_t\}_{0 \le t \le T}$. We use the same notation $(\Omega,\mathbb{F},\{\mathcal{F}_t\}_{0 \le t \le T}, \mathbb{P})$ to denote the filtered probability space generated by $\{W_t + \xi\}_{0 \le t \le T}$.
The notation $|x|$ denotes the Euclidean norm of a vector $x$ and $\|A\| = \sqrt{\mathrm{trace}(A\transpose A)}$ denotes the Frobenius norm of a matrix $A$.

Consider the following coupled FBSDEs
\begin{empheq}[left=\empheqlbrace]{align}
    &X_t = \xi + \int_{0}^{t}b(s,X_s,Y_s)\, \mathrm{d}s + \int_{0}^{t}\sigma(s,X_s,Y_s)\, \mathrm{d}W_s, \label{eq:FBSDE_forward} \\
    &Y_t = g(X_T) + \int_{t}^{T}f(s,X_s,Y_s,Z_s)\, \mathrm{d}s - \int_{t}^{T}(Z_s)\transpose\, \mathrm{d}W_s, \label{eq:FBSDE_backward}
\end{empheq}
in which $X_t$ takes values in $\mathbb{R}^m$, $Y_t$ takes values in $\mathbb{R}$, and $Z_t$ takes values in $\mathbb{R}^d$. 
Here we assume $Y_t$ to be one-dimensional to simplify the presentation. The result can be extended without any difficulty to the case where $Y_t$ is multi-dimensional.  We say $(X_t, Y_t, Z_t)$ is a solution of the above FBSDEs, if all its components are $\mathcal{F}_{t}$-adapted and square-integrable, together satisfying equations \eqref{eq:FBSDE_forward}\eqref{eq:FBSDE_backward}.

Solving coupled FBSDEs numerically is more difficult than solving decoupled FBSDEs. Except the Picard iteration method developed in~\cite{bender2008time}, most methods exploit the relation to quasilinear parabolic PDEs via the four-time-step-scheme in \cite{ma1994solving}. This type of methods suffers from high dimensionality due to spatial discretization of PDEs. In contrast, our strategy, starting from simulating the coupled FBSDEs directly, is a new purely probabilistic scheme. To state the numerical algorithm precisely, we consider a partition of the time interval $[0,T]$, $\pi: 0 = t_0 < t_1 < \dots < t_N = T$ with $h = T/N$ and $t_i = ih$. Let $\Delta W_i \coloneqq W_{t_{i+1}} - W_{t_i}$ for $i = 0, 1, \dots, N-1$.
Inspired by the nonlinear Feynman--Kac formula that will be introduced below, we view $Y_0$ as a function of $X_0$ and view $Z_t$ as a function of $X_t$ and $Y_t$. 
Equipped with this viewpoint, our goal becomes finding appropriate functions $\mu_0^\pi: \mathbb{R}^m \rightarrow \bR$ and $\phi_i^\pi:\mathbb{R}^m\times \mathbb{R} \rightarrow \mathbb{R}^d$ for $i = 0, 1,\dots,N-1$ such that $\mu_0^\pi(\xi)$ and $\phi_i^\pi(X_{t_i}^\pi, Y_{t_i}^\pi)$ can serve as good surrogates of $Y_0$ and $Z_{t_i}$, respectively. To this end, we consider the classical Euler scheme
\begin{align}
\begin{dcases}
    X_0^{\pi} = \xi, \quad Y_0^{\pi} = \mu_0^\pi(\xi), \\
    X_{t_{i+1}}^{\pi} = X_{t_i}^{\pi} + b(t_i,X_{t_i}^{\pi},Y_{t_i}^{\pi})h + \sigma(t_i,X_{t_i}^{\pi},Y_{t_i}^{\pi})\Delta W_i, \\
    Z_{t_i}^\pi = \phi_i^\pi(X_{t_i}^\pi,Y_{t_i}^\pi), \\
    Y_{t_{i+1}}^{\pi} = Y_{t_i}^{\pi} - f(t_i,X_{t_i}^{\pi},Y_{t_i}^\pi,Z_{t_i}^\pi)h + (Z_{t_i}^\pi)\transpose\Delta W_i.
\end{dcases}
\label{eq:FB_system1}
\end{align}
Without loss of clarity, here we use the notation $X_0^{\pi}$ as $X_{t_0}^{\pi}$, $X_T^{\pi}$ as $X_{t_N}^{\pi}$, etc.

Following the spirit of the deep BSDE method, we employ a stochastic optimizer to solve the following stochastic optimization problem 
\begin{align}
    &\inf_{\mu_0^{\pi} \in \mathcal{N}'_0, \phi_i^\pi \in \mathcal{N}_i} F(\mu_0^{\pi},\phi_0^\pi,\dots,\phi_{N-1}^\pi) \coloneqq E|g(X_T^{\pi}) - Y_T^{\pi}|^2, 
\label{eq:disc_objective}
\end{align}
where $\mathcal{N}'_0$ and $\mathcal{N}_i~(0\leq i \leq {N-1})$ are parametric function spaces generated by neural networks. 
To see intuitively where the objective function~\eqref{eq:disc_objective} comes from, we consider the following variational problem:
\begin{align}
&\inf_{Y_0,\{Z_t\}_{0\le t \le T}} E|g(X_T) - Y_T|^2, \label{eq:objective}\\
&s.t.~~ X_t = \xi + \int_{0}^{t}b(s,X_s,Y_s)\, \mathrm{d}s + \int_{0}^{t}\sigma(s,X_s,Y_s)\, \mathrm{d}W_s, \notag \\
 & \qquad Y_t = Y_0 - \int_{0}^{t}f(s,X_s,Y_s,Z_s)\, \mathrm{d}s + \int_{0}^{t}(Z_s)\transpose\, \mathrm{d}W_s,  \notag
\end{align}
where $Y_0$ is $\mathcal{F}_0$-measurable and square-integrable, and $Z_t$ is a $\mathcal{F}_t$-adapted square-integrable process. 
The solution of the FBSDEs~\eqref{eq:FBSDE_forward}\eqref{eq:FBSDE_backward} is a minimizer of the above problem since the loss function attains zero when it is evaluated at the solution.
In addition, the wellposedness of the FBSDEs (under some regularity conditions) ensures the existence and uniqueness of the minimizer. 
Therefore, we expect~\eqref{eq:disc_objective}, as a discretized counterpart of~\eqref{eq:objective}, defines a benign optimization problem and the associated near-optimal solution
provides us a good approximate solution of the original FBSDEs. The reason we do not represent $Z_{t_i}$ as a function of $X_{t_i}$ only is that the process $\{X_{t_i}^\pi\}_{0\le i \le N}$ is not Markovian, while the process $\{X_{t_i}^\pi,Y_{t_i}^\pi\}_{0 \le i \le N}$ is Markovian, which facilitates our analysis considerably. If $b$ and $\sigma$ are both independent of $Y$, then the FBSDEs~\eqref{eq:FBSDE_forward}\eqref{eq:FBSDE_backward} are decoupled, we can take $\phi_i^\pi$ as a function of $X_{t_i}^\pi$ only, as the numerical scheme introduced in \cite{han2018solving,weinan2017deep}.

Our two main theorems regarding the deep BSDE method are the following, mainly on the justification and property of the objective function~\eqref{eq:disc_objective} in the general coupled case, regardless of the specific choice of parametric function spaces.
An important assumption for the two theorems is the so-called weak coupling or monotonicity condition, which will be explained in detail in section 3.
The precise statement of the theorems can be found in Theorem \ref{thm:main1_detail} (section 4) and Theorem \ref{thm:main2_detail} (section 5), respectively.
\begin{thm}
\label{thm:main1}
Under some assumptions, there exists a constant C, independent of h, d, and m, such that for sufficiently small h,
\begin{equation}
\label{eq:main1}
    \sup_{ t \in [0,T]} (E|X_t - \hat{X}_t^{\pi}|^2 + E|Y_t - \hat{Y}_t^{\pi}|^2) + \int_{0 }^{T} E|Z_t - \hat{Z}_t^\pi|^2 \, \mathrm{d}t \le C[h +  E|g(X_{T}^\pi) - Y_T^{\pi}|^2],
\end{equation}
where $\hat{X}_t^{\pi} = X_{t_i}^\pi$, $\hat{Y}_t^{\pi} = Y_{t_i}^\pi$, $\hat{Z}_t^{\pi} = Z_{t_i}^\pi$ for $t \in [t_i,t_{i+1})$.
\end{thm}
 
\begin{thm}
\label{thm:main2}
Under some assumptions, there exists a constant C, independent of h, d and m, such that for sufficiently small h,
\begin{align*}
    &\inf_{\mu_0^\pi \in \mathcal{N}'_0 , \phi_i^\pi \in \mathcal{N}_i} E|g(X_T^\pi) - Y_T^{\pi}|^2 \notag \\
    \le~& C\Big\{h +\inf_{\mu_0^\pi \in \mathcal{N}'_0, \phi_i^\pi \in \mathcal{N}_i} \big[ E|Y_0 - \mu_0^\pi(\xi)|^2 \\
    &\quad\quad\quad\quad\quad\quad\quad\quad~~ + \sum_{i = 0}^{N-1}E|E[\tilde{Z}_{t_i}|X_{t_i}^\pi,Y_{t_i}^\pi] - \phi_i^\pi(X_{t_i}^\pi,Y_{t_i}^\pi)|^2h\big] \Big\},
\end{align*}
where $\tilde{Z}_{t_i} =h^{-1}E[\int_{t_i}^{t_{i+1}}Z_t\,\rmd t|\mathcal{F}_{t_i}]$. If $b$ and $\sigma$ are independent of $Y$, the term $E[\tilde{Z}_{t_i}|X_{t_i}^\pi,Y_{t_i}^\pi]$ can be replaced with $E[\tilde{Z}_{t_i}|X_{t_i}^\pi]$. 
\end{thm}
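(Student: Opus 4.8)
The plan is to establish a \emph{pointwise} bound and then pass to the infimum: for every admissible pair $(\mu_0^\pi,\{\phi_i^\pi\})$ I would show that the objective $E|g(X_T^\pi)-Y_T^\pi|^2$ is dominated by $C$ times the sum of $h$, the initial error $E|Y_0-\mu_0^\pi(\xi)|^2$, and the control error $\sum_i E|E[\tilde Z_{t_i}\mid X_{t_i}^\pi,Y_{t_i}^\pi]-\phi_i^\pi(X_{t_i}^\pi,Y_{t_i}^\pi)|^2 h$; taking the infimum over $\mathcal N_0'$ and the $\mathcal N_i$ on both sides then gives the statement. First I would fix the networks, run the scheme \eqref{eq:FB_system1} to obtain $(X^\pi,Y^\pi,Z^\pi)$, and compare it with the continuous solution $(X,Y,Z)$ at the grid points, writing $\delta X_i=X_{t_i}^\pi-X_{t_i}$ and $\delta Y_i=Y_{t_i}^\pi-Y_{t_i}$. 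Since $Y_T=g(X_T)$ and $g$ is Lipschitz, it suffices to bound $E|\delta X_N|^2+E|\delta Y_N|^2$, because then $E|g(X_T^\pi)-Y_T^\pi|^2\le 2L_g^2\,E|\delta X_N|^2+2\,E|\delta Y_N|^2$.

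The crux is the decomposition of the control mismatch appearing in the backward increment. Using that $Z_{t_i}^\pi$ is $\mathcal F_{t_i}$-measurable and the defining property of $\tilde Z_{t_i}$, the increment $(Z_{t_i}^\pi)\transpose\Delta W_i-\int_{t_i}^{t_{i+1}}(Z_s)\transpose\,\rmd W_s$ splits into an orthogonal sum whose cross term vanishes after conditioning on $\mathcal F_{t_i}$; the discretization piece $\int_{t_i}^{t_{i+1}}(\tilde Z_{t_i}-Z_s)\transpose\,\rmd W_s$ sums to $O(h)$ by the $L^2$ path regularity of $Z$. The remaining piece carries the factor $E|Z_{t_i}^\pi-\tilde Z_{t_i}|^2$, which I would split once more, orthogonally in $L^2$, as
\[
E\big|\phi_i^\pi(X_{t_i}^\pi,Y_{t_i}^\pi)-E[\tilde Z_{t_i}\mid X_{t_i}^\pi,Y_{t_i}^\pi]\big|^2+E\big|E[\tilde Z_{t_i}\mid X_{t_i}^\pi,Y_{t_i}^\pi]-\tilde Z_{t_i}\big|^2 .
\]
The first summand is exactly the neural-network control error on the right-hand side of the statement.

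The hard part is controlling the second, \emph{projection}, summand, which is absent from the final bound and must be reabsorbed. Here I would invoke the Lipschitz decoupling field: under the standing weak-coupling/monotonicity and regularity assumptions the continuous solution admits a representation $Z_t=z(t,X_t)$ with $z$ Lipschitz in $x$. Since the conditional expectation is the $L^2$-optimal function of $(X_{t_i}^\pi,Y_{t_i}^\pi)$, testing it against the competitor $\psi(x,y)=z(t_i,x)$ yields
\[
E\big|E[\tilde Z_{t_i}\mid X_{t_i}^\pi,Y_{t_i}^\pi]-\tilde Z_{t_i}\big|^2\le 2\,E|\tilde Z_{t_i}-Z_{t_i}|^2+2L_z^2\,E|\delta X_i|^2 ,
\]
with $\sum_i E|\tilde Z_{t_i}-Z_{t_i}|^2 h=O(h)$ again by path regularity. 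Thus the projection error is bounded by the scheme's \emph{own} forward error $E|\delta X_i|^2$ plus $O(h)$, which is precisely what permits reabsorption through Gronwall. I expect this step to be the main obstacle, since it is what ties the conditional-expectation target, defined relative to the scheme values, back to the scheme's discretization error via the regularity of the decoupling field.

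The remaining steps mirror the coupled error recursion underlying the a posteriori estimate. For the forward part I would derive $E|\delta X_{i+1}|^2\le(1+Ch)E|\delta X_i|^2+Ch\,E|\delta Y_i|^2+(\text{step-}i\ \text{discretization})$ from the Lipschitz continuity of $b,\sigma$ and Euler consistency, and for the backward part an analogous recursion for $E|\delta Y_{i+1}|^2$ in which the control contributions enter only through the terms analyzed above; the forward--backward cross terms are closed using the monotonicity condition, which is also what keeps $C$ independent of $h$, $d$, and $m$. Summing $\Gamma_i=E|\delta X_i|^2+E|\delta Y_i|^2$ and applying discrete Gronwall, the $O(h)$ discretization and path-regularity terms together with the reabsorbed projection error give $\Gamma_N\le C\big[h+E|Y_0-\mu_0^\pi(\xi)|^2+\sum_i E|\phi_i^\pi-E[\tilde Z_{t_i}\mid X_{t_i}^\pi,Y_{t_i}^\pi]|^2 h\big]$, with $\Gamma_0=E|\mu_0^\pi(\xi)-Y_0|^2$. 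Feeding this into the Lipschitz bound for $g$ yields the pointwise estimate, and taking the infimum completes the proof; in the decoupled case $X^\pi$ is autonomous, $z=z(t,X_t)$, and the competitor depends on $x$ alone, so the conditioning reduces to $X_{t_i}^\pi$.
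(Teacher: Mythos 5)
Your overall architecture (a pointwise bound followed by taking infima, an orthogonal splitting of $E|\tilde Z_{t_i}-Z_{t_i}^\pi|^2$ into the network error plus a projection error, then reabsorption of the projection error) matches the paper's, but the step you yourself identify as the crux is precisely where the argument fails under the theorem's hypotheses. You control $E\big|E[\tilde Z_{t_i}\,|\,X_{t_i}^\pi,Y_{t_i}^\pi]-\tilde Z_{t_i}\big|^2$ by testing the conditional expectation against the competitor $z(t_i,\cdot)$, invoking a decoupling field $Z_t=z(t,X_t)$ with $z$ Lipschitz in $x$. Under the standing Assumptions \ref{assu1}--\ref{assu3} no such field is available: Theorem~\ref{thm4} yields only a Lipschitz $u$ with $Y_t=u(t,X_t)$, a \emph{viscosity} (not classical) solution, and path regularity of $Z$ in the time-averaged sense $\tilde Z$. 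The representation $Z_t=\sigma\transpose(t,X_t,u(t,X_t))\nabla_x u(t,X_t)$ with Lipschitz $x$-dependence requires the much stronger hypotheses of Theorem~\ref{thm:Feynman-Kac} ($m=d$, uniform ellipticity, smooth coefficients, $g$ bounded in $C^{2,\alpha}$); moreover, even then the Lipschitz constant $L$ comes from Schauder estimates and may depend on the dimension, as the paper remarks after Theorem~\ref{thm:main2'} --- so the constant $C$ produced by your Gronwall closure, which inherits $L_z$, would not be independent of $d$ and $m$ as the statement requires. (Your side claim $\sum_i E|Z_{t_i}-\tilde Z_{t_i}|^2h=O(h)$ also needs a c\`adl\`ag or continuous version of $Z$, again extra regularity.) In effect, your proposal proves a version of Theorem~\ref{thm:main2'} --- the paper's variant under stronger conditions --- rather than Theorem~\ref{thm:main2} under Assumptions \ref{assu1}--\ref{assu3}; the fact that your recursion closes by plain Gronwall with no smallness condition, whereas both detailed theorems in the paper require one, is a symptom that the assumed Lipschitz field is doing work the hypotheses do not pay for.

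What the paper does instead is replace the continuous decoupling field by a purely discrete, merely \emph{measurable} one. Lemma~\ref{lem5} constructs, via the Banach fixed-point theorem, deterministic functions $U_i^\pi,V_i^\pi$ such that the exactly-solved discrete backward system \eqref{eq:lem5_system} driven by the scheme's forward process satisfies $Z_{t_i}^{\pi,1}=V_i^\pi(X_{t_i}^\pi,Y_{t_i}^\pi)$; measurability alone suffices because $V_i^\pi(X_{t_i}^\pi,Y_{t_i}^\pi)$ is only used as a competitor in the $L^2$-optimality of the conditional expectation. The projection error is then bounded (after inserting the implicit Bender--Zhang scheme $(\overline X^\pi,\overline Y^\pi,\overline Z^\pi)$ of Theorem~\ref{thm5} as an intermediary, which contributes $O(h)$ by \eqref{eq:path_regularity}\eqref{eq:discrete_convergece}) by the difference $\delta Z_i$ of two exactly-solved discrete backward systems driven by the forward processes $X^\pi$ and $\overline X^\pi$, and Lemma~\ref{lem4} estimates $\sum_i E|\delta Z_i|^2h$ in terms of $E|X_{t_i}^\pi-\overline X_{t_i}^\pi|^2$. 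The price is that these forward differences can only be controlled by the objective $E|g(X_T^\pi)-Y_T^\pi|^2$ \emph{itself}, via the a posteriori machinery of Lemma~\ref{lem1} and estimate \eqref{eq:overline_P_prime}; the resulting self-referential term is reabsorbed not by Gronwall but by the quantitative smallness condition $\overline{B_0}<1$ (weak coupling/monotonicity), which is exactly why Theorem~\ref{thm:main2_detail} carries that hypothesis. To repair your proof you must either adopt the hypotheses of Theorem~\ref{thm:main2'} outright (accepting a possibly dimension-dependent constant), or replace the Lipschitz-field step by the Lemma~\ref{lem5}/Lemma~\ref{lem4} mechanism together with the $\overline{B_0}<1$ bootstrap.
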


Briefly speaking, Theorem \ref{thm:main1} states that the simulation error (left side of equation~\eqref{eq:main1}) can be bounded through the value of the objective function~\eqref{eq:disc_objective}. 
To the best of our knowledge, this is the first result for the error estimation of the coupled FBSDEs, concerning both time discretization error  and terminal distance.
Theorem \ref{thm:main2} states that the optimal value of the objective function can be small if the approximation capability of the parametric function spaces ($\mathcal{N}'_0$ and $\mathcal{N}_i$ above) is high. 
Neural networks are a promising candidate for such a requirement, especially in high-dimensional problems.
There are numerous results, dating back to the 90s~(see, e.g., \cite{cybenko1989approximation,funahashi1989approximate,hornik1989multilayer,barron1993universal,arora2016understanding,eldan2016power,cohen2016expressive,mhaskar2016deep,bolcskei2017optimal,liang2017deep}), in regard to the universal approximation and complexity of neural networks.
There are also some recent analysis~\cite{grohs2018proof,jentzen2018proof,berner2018analysis,hutzenthaler2019proof} on approximating the solutions of certain parabolic partial differential equations with neural networks.
However, the problem is still far from resolved. Theorem \ref{thm:main2} implies that if the involved conditional expectations can be approximated by neural networks whose numbers of parameters growing at most polynomially both in the dimension and the reciprocal of the required accuracy, then the solutions of the considered FBSDEs can be represented in practice without the curse of dimensionality. Under what conditions this assumption is true is beyond the scope of this work and remains for further investigation.

The above-mentioned scheme in \eqref{eq:FB_system1}\eqref{eq:disc_objective} is for solving FBSDEs. The so-called nonlinear Feynman--Kac formula, connecting FBSDEs with the quasilinear parabolic PDEs, provides an approach to numerically solve quasilinear parabolic PDEs \eqref{eq:pde} below through the same scheme. 
We recall a concrete version of the nonlinear Feynman--Kac formula in Theorem \ref{thm:Feynman-Kac} below and refer interested readers to e.g., \cite{ma2007forward} for more details. According to this formula, the term $E|Y_0 - Y_0^\pi|^2$ can be interpreted as $E|u(0,\xi) - \mu_0^\pi(\xi)|^2$. Therefore, we can choose the random variable $\xi$ with a delta distribution, a uniform distribution in a bounded region, or any other distribution we are interested in. After solving the optimization problem, we obtain $\mu_0^\pi(\xi)$ as an approximation of $u(0,\xi)$. See \cite{han2018solving,weinan2017deep} for more details.

\begin{thm}\label{thm:Feynman-Kac}
Assume
\begin{enumerate}[\indent 1.]
\item $m = d$ and $b(t,x,y)$, $\sigma(t,x,y)$, $f(t,x,y,z)$ are smooth functions with bounded first-order derivatives with respect to $x, y, z$.
\item There exist a positive continuous function $\nu$ and a constant $\mu$, satisfying that
\begin{align*}
&\nu(|y|)\mathbf{I} \le \sigma\sigma\transpose(t,x,y) \le \mu \mathbf{I}, \\
&|b(t,x,0)| + |f(t,x,0,z)| \le \mu.
\end{align*}
\item There exists a constant $\alpha \in (0,1)$ such that $g$ is bounded in the H\"older space $C^{2,\alpha}(\bR^m)$.
\end{enumerate}
Then the following quasilinear PDE has a unique classical solution $u(t,x)$ that is bounded with bounded $u_t$, $\nabla_x u$, and $\nabla^2_x u$,
\begin{align}
\begin{dcases}
u_t + \frac{1}{2}\text{trace}(\sigma\sigma\transpose(t,x,u)\nabla^2_x u)   \\
\hphantom{u_t} +~b\transpose(t,x,u) \nabla_x u + f(t,x,u,\sigma\transpose(t,x,u) \nabla_x u) = 0, \\
u(T,x) = g(x). \label{eq:pde}
\end{dcases}
\end{align}
The associated FBSDEs~\eqref{eq:FBSDE_forward}\eqref{eq:FBSDE_backward} have a unique solution $(X_t, Y_t, Z_t)$ with $Y_t = u(t,X_t)$, $Z_t = \sigma\transpose(t,X_t,u(t,X_t))\nabla_x u(t,X_t)$, and $X_t$ is the solution of the following SDE
\begin{equation*}
X_t = \xi + \int_{0}^{t}b(s,X_s,u(s,X_s))\, \mathrm{d}s + \int_{0}^{t}\sigma(s,X_s,u(s,X_s))\, \mathrm{d}W_s.
\end{equation*}
\end{thm}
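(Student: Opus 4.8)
The plan is to split the statement into its two halves — the analytic well-posedness of the quasilinear PDE \eqref{eq:pde}, and the probabilistic identification of the FBSDE solution — and to treat them in that order, since the second half feeds on the classical solution produced by the first. This is the route taken in the four-step-scheme literature (e.g. \cite{ma1999forward}): once one has a sufficiently regular decoupling field $u$, the coupled system collapses to a single forward SDE plus an It\^o verification.

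For the PDE, the hypotheses are tailored to invoke the theory of uniformly parabolic quasilinear equations. First I would derive an a priori $L^\infty$ bound on any classical solution $u$: writing the equation as a linear parabolic equation with coefficients frozen at $u$ and using that $f$ has bounded derivative in $y$ together with $|f(t,x,0,z)| \le \mu$ and $\|g\|_\infty < \infty$ (Assumptions 2, 3), the maximum principle — via a barrier of the form $e^{\lambda(T-t)}(\|g\|_\infty + C(T-t))$ — bounds $\|u\|_\infty$ by a constant depending only on $\mu, T$. The point of this step is that on the range $\{|y| \le \|u\|_\infty\}$ the lower bound $\nu(|y|)\mathbf I \le \sigma\sigma\transpose$ becomes a uniform ellipticity bound $\nu_0 \mathbf I \le \sigma\sigma\transpose \le \mu \mathbf I$ with $\nu_0 > 0$, so the equation is genuinely uniformly parabolic along solutions. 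With uniform parabolicity and the bounded-derivative (hence Lipschitz, linear-growth) structure of $b,\sigma,f$, the De Giorgi--Nash--Moser and Schauder estimates yield interior $C^{1+\alpha}$ bounds and then, using $g \in C^{2,\alpha}$, global bounds on $u_t$, $\nabla_x u$, $\nabla_x^2 u$. These a priori estimates fed into a Leray--Schauder fixed-point argument (or the method of continuity) give existence of a bounded classical solution, and the comparison principle for the uniformly parabolic linearized operator gives uniqueness. I would cite the corresponding theorem in Ladyzhenskaya--Solonnikov--Ural'ceva or Friedman rather than reprove it.

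For the FBSDE half, with $u$ in hand define $X_t$ as the solution of the forward SDE with coefficients $b(s,X_s,u(s,X_s))$ and $\sigma(s,X_s,u(s,X_s))$; because $u$ and $\nabla_x u$ are bounded and $b,\sigma$ have bounded first derivatives, these coefficients are Lipschitz in $x$ with linear growth, so $X_t$ exists uniquely and is square-integrable. Then set $Y_t := u(t,X_t)$ and $Z_t := \sigma\transpose(t,X_t,u(t,X_t))\nabla_x u(t,X_t)$. Applying It\^o's formula to $u(t,X_t)$ and substituting the PDE \eqref{eq:pde} to replace $u_t + \tfrac12\mathrm{trace}(\sigma\sigma\transpose\nabla_x^2 u) + b\transpose\nabla_x u$ by $-f(t,X_t,u,\sigma\transpose\nabla_x u) = -f(t,X_t,Y_t,Z_t)$, while the martingale part $(\nabla_x u)\transpose\sigma\,\rmd W_t$ equals $(Z_t)\transpose\,\rmd W_t$, produces exactly \eqref{eq:FBSDE_backward} with terminal condition $Y_T = u(T,X_T) = g(X_T)$. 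Adaptedness and square-integrability follow from the boundedness of $u,\nabla_x u,\sigma$, so $(X_t,Y_t,Z_t)$ is a solution with the claimed representation.

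Finally I would establish uniqueness of the FBSDE solution. The cleanest route is a verification argument: for any adapted square-integrable solution $(X,Y,Z)$, apply It\^o to $u(t,X_t)$ and compare with the backward equation to show that $Y_t = u(t,X_t)$ and $Z_t = \sigma\transpose\nabla_x u$ must hold (using PDE uniqueness), which in turn forces $X$ to solve the decoupled SDE and hence be unique. Alternatively one can patch short-time contraction estimates for the coupled system using the bounded Lipschitz constants. I expect the genuine difficulty to lie entirely in the PDE half — specifically the uniform gradient estimate on $\nabla_x u$, which is delicate because the principal coefficient $\sigma\sigma\transpose(t,x,u)$ itself depends on the unknown $u$; the probabilistic half is then a routine, if careful, application of It\^o's formula.
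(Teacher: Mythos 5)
The paper gives no proof of this theorem: it is recalled as a known result, with the reader referred to the four-step-scheme literature (\cite{ma1999forward}) for details. Your outline --- an a priori $L^\infty$ bound via the maximum principle so that $\nu(|y|)\mathbf{I} \le \sigma\sigma\transpose$ becomes a genuine uniform ellipticity bound on the range of $u$, then Ladyzhenskaya--Solonnikov--Ural'ceva/Schauder estimates and a Leray--Schauder fixed point for the PDE, followed by an It\^o verification for existence of the FBSDE solution and a comparison argument for uniqueness --- is precisely the standard four-step-scheme proof given in that reference, so it matches the paper's (cited) approach; the only slight misstatement is that FBSDE uniqueness rests not on ``PDE uniqueness'' but on a Gronwall estimate for $E|Y_t - u(t,X_t)|^2$ along an arbitrary solution $(X,Y,Z)$ (using the boundedness of $\nabla_x u$ and $\nabla^2_x u$ to control the coefficient differences when $b,\sigma$ are evaluated at $Y_t$ rather than $u(t,X_t)$), after which $Y_t = u(t,X_t)$ forces $X$ to solve the decoupled Lipschitz SDE, exactly as you conclude.
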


\begin{rem}
The statement regarding FBSDEs ~\eqref{eq:FBSDE_forward}\eqref{eq:FBSDE_backward} in Theorem \ref{thm:Feynman-Kac} is developed through a PDE-based argument, which requires $m = d$, uniform ellipticity of $\sigma$, and high-order smoothness of $b,\sigma,f$, and $g$. An analogous result through probabilistic
argument is given below in Theorem \ref{thm4} (point 4). In that case, we only need the Lipschitz condition for all of the involved functions, in addition to some weak coupling or monotonicity conditions demonstrated in Assumption \ref{assu3}. Note that the Lipschitz condition alone does not guarantee the existence of a solution to the coupled FBSDEs, even in the situation when $b,f,\sigma$ are linear (see \cite{bender2008time,ma2007forward} for a concrete counterexample).
\end{rem}
\medskip

\begin{rem}
Theorem \ref{thm:Feynman-Kac} also implies that the assumption that the drift function $b$ only depends on $x, y$ is general. If $b$ depends on $z$ as well, one can  move the associated term in~\eqref{eq:pde} into the nonlinearity $f$ and apply the nonlinear Feynman--Kac formula back to obtain an equivalent system of coupled FBSDEs, in which the new drift function is independent of $z$.
\end{rem}

\section{Preliminaries}
In this section, we introduce our assumptions and two useful results in \cite{bender2008time}.
We use the notation $\Delta x = x_1 - x_2$, $\Delta y = y_1 - y_2$, $\Delta z = z_1 - z_2$.  
\begin{assu}\label{assu1}
\begin{enumerate}[(i)]
\item There exist (possibly negative) constants $k_b$, $k_f$ such that
\begin{align*}
[b(t,x_1,y) - b(t,x_2,y)]\transpose\Delta x &\le k_b |\Delta x|^2, \\
[f(t,x,y_1,z) - f(t,x,y_2,z)]\Delta y &\le k_f |\Delta y|^2.
\end{align*}

\item b, $\sigma$, f, g are uniformly Lipschitz continuous with respect to (x,y,z). In particular, there are non-negative constants K, $b_y$, $\sigma_x$, $\sigma_y$, $f_x$, $f_z$, and $g_x$ such that
\begin{align*}
|b(t,x_1,y_1) - b(t,x_2,y_2)|^2 &\le K|\Delta x|^2 + b_y|\Delta y|^2, \\
\|\sigma(t,x_1,y_1) - \sigma(t,x_2,y_2)\|^2 &\le \sigma_x|\Delta x|^2 +  \sigma_y|\Delta y|^2, \\
|f(t,x_1,y_1,z_1) - f(t,x_2,y_2,z_2)|^2 &\le f_x|\Delta x|^2 + K|\Delta y|^2 + f_z|\Delta z|^2, \\
|g(x_1) - g(x_2)|^2 &\le g_x|\Delta x|^2.
\end{align*}
\item $b(t,0,0)$, $f(t,0,0,0)$, and $\sigma(t,0,0)$ are bounded. In particular, there are constants $b_0$, $\sigma_0$, $f_0$, and $g_0$ such that
\begin{align*}
|b(t,x,y)|^2 &\le b_0 + K|x|^2 + b_y|y|^2, \\
\|\sigma(t,x,y)\|^2 &\le \sigma_0 + \sigma_x|x|^2 + \sigma_y|y|^2, \\
|f(t,x,y,z)|^2 &\le f_0 + f_x|x|^2 + K|y|^2 + f_z|z|^2, \\
|g(x)|^2 &\le g_0 + g_x|x|^2. 
\end{align*}
\end{enumerate}
We note here $b_y$ et al. are all constants, not partial derivatives. 
For convenience, we use $\mathscr{L}$ to denote the set of all the constants mentioned above and assume K is the upper bound of $\mathscr{L}$.
\end{assu}

\begin{assu}\label{assu2}
$b, \sigma, f$ are uniformly H\"older-$\frac{1}{2}$ continuous with respect to $t$. We assume the same constant K to be the upper bound of the square of the H\"older constants as well.
\end{assu}

\begin{assu}\label{assu3}
One of the following five cases holds:
\begin{enumerate}[\indent 1.]
\item Small time duration, that is, T is small.
\item  Weak coupling of Y into the forward SDE~\eqref{eq:FBSDE_forward}, that is, $b_y$ and $\sigma_y$ are small. In particular, if $b_y = \sigma
_y = 0$, then the forward equation does not depend on the backward one and, thus, equations~\eqref{eq:FBSDE_forward}\eqref{eq:FBSDE_backward} are decoupled.
\item Weak coupling of X into the backward SDE~\eqref{eq:FBSDE_backward}, that is, $f_x$ and $g_x$ are small. In particular, if $f_x = g_x = 0$, then the backward equation does not depend on the forward one and, thus, equations~\eqref{eq:FBSDE_forward}\eqref{eq:FBSDE_backward} are also decoupled. In fact, in this case, Z = 0 and~\eqref{eq:FBSDE_backward} reduces to an ODE.
\item f is strongly decreasing in y, that is, $k_f$ is very negative.
\item b is strongly decreasing in x, that is, $k_b$ is very negative.
\end{enumerate}
\end{assu}
The assumptions stated above are usually called weak coupling and monotonicity conditions in literature~\cite{bender2008time,antonelli1993backward,pardoux1999forward}.
To make it more precise, we define
\begin{align*}
&L_0 = [b_y +\sigma_y ][g_x +f_x T]Te^{[b_y +\sigma_y][g_x +f_x T]T+[2k_b +2k_f +2+\sigma_x +f_z ]T}, \\
&L_1 = [g_x +f_x T][e^{[b_y +\sigma_y][g_x +f_x T]T+[2k_b +2k_f +2+\sigma_x +f_z ]T + 1} \vee 1], \\
&\Gamma_0(x) = \frac{e^x - 1}{x},  ~~(x>0), \\
&\Gamma_1(x,y) = \sup_{0 < \theta < 1}\theta e^{\theta x}\Gamma_0(y), \\
&c = \inf_{\lambda_1 > 0}\Big\{[e^{[2k_b +1+\sigma_x +[b_y +\sigma_y ]L_1]T} \vee 1](1 + \lambda_1^{-1})[b_y+\sigma_y]T \notag \\
&\qquad\qquad\quad~ \times[g_x\Gamma_1([2k_f+1 + f_z]T, [2k_b+1+\sigma_x+(1+\lambda_1)[b_y+\sigma_y]L_1]T)   \notag \\
&\qquad\qquad~ + f_x T\Gamma_0([2k_f+1+f_z]T) \\
&\qquad\qquad\quad~ \times \Gamma_0 ( 2k_b +1+\sigma_x +(1+\lambda_1 )[b_y +\sigma_y]L_1]T)\Big\}.
\end{align*}
Then, a specific quantitative form of the above five conditions can be summarized as:
\begin{equation}
\label{eq:assu3_ineq}
L_0 < e^{-1} \text{~~and~~} c < 1.
\end{equation}
In other words, if any of the five conditions of the weak coupling and monotonicity conditions holds to certain extent, the two inequalities in~\eqref{eq:assu3_ineq} hold. Below, we refer to ~\eqref{eq:assu3_ineq} as Assumption~\ref{assu3} and the five general qualitative conditions described above as the weak coupling and monotonicity conditions.

The above three assumptions are basic assumptions in \cite{bender2008time}, which we need in order to use the results from \cite{bender2008time}, as stated in Theorems \ref{thm4} and \ref{thm5} below.
Theorem~\ref{thm4} gives the connections between coupled FBSDEs and quasilinear parabolic PDEs under weaker conditions. Theorem~\ref{thm5} provides the convergence of the implicit scheme for coupled FBSDEs. Our work primarily uses the same set of assumptions except that we assume some further quantitative restrictions related to the weak coupling and monotonicity conditions, which will be transparent through the extra constants we define in proofs. Our aim is to provide explicit conditions on which our results hold and more clearly present the relationship between these constants and the error estimates. As will be seen in the proof, roughly speaking, the weaker the coupling (resp., the stronger the monotonicity, the smaller the time horizon) is, the easier the condition is satisfied, and the smaller the constant $C$ related with error estimates are.

\begin{thm}\label{thm4}
Under Assumptions \ref{assu1}, \ref{assu2}, and \ref{assu3}, there exists a function u: $\bR \times \bR^m \rightarrow \bR$ that satisfies the following statements.
\begin{enumerate}[\indent 1.]
\item $|u(t,x_1) - u(t,x_2)|^2 \le L_1|x_1-x_2|^2$.
\item $|u(s,x) - u(t,x)|^2 \le C(1+|x|^2)|s-t|$ with some constant C depending on $\mathscr{L}$ and $T$.

\item u is a viscosity solution of the PDE \eqref{eq:pde}.

\item The FBSDEs \eqref{eq:FBSDE_forward}\eqref{eq:FBSDE_backward} have a unique solution $(X_t,Y_t,Z_t)$ and $Y_t = u(t,X_t)$. Thus, $(X_t,Y_t,Z_t)$ satisfies decoupled FBSDEs
\begin{empheq}[left=\empheqlbrace]{align*}
    X_t &= \xi + \int_{0}^{t}b(s,X_s,u(s,X_s))\, \mathrm{d}s + \int_{0}^{t}\sigma(s,X_s,u(s,X_s))\, \mathrm{d}W_s, \\
Y_t &= g(X_T) + \int_{t}^{T}f(s,X_s,Y_s,Z_s) \, \mathrm{d}s - \int_{t}^{T}(Z_s)\transpose \, \mathrm{d}W_s.
\end{empheq}
\end{enumerate}
Furthermore, the solution of the FBSDEs satisfies the path regularity with some constant C depending on $\mathscr{L}$ and T
\begin{equation}
\sup_{t\in[0,T]} (E|X_t - \tilde{X}_t|^2 + E|Y_t - \tilde{Y}_t|^2) + \int_{0}^{T} E|Z_t - \tilde{Z}_{t}|^2\,\rmd t \le C(1 + E|\xi|^2)h, \label{eq:path_regularity}
\end{equation}
in which $\tilde{X}_t = X_{t_i}$, $\tilde{Y}_t = Y_{t_i}$, $\tilde{Z}_t =h^{-1}E[\int_{t_i}^{t_{i+1}}Z_t \,\rmd t|\mathcal{F}_{t_i}]$ for $t \in [t_i,t_{i+1})$. If $Z_t$ is c\`adl\`ag, we can replace $h^{-1}E[\int_{t_i}^{t_{i+1}}Z_t\,\rmd t|\mathcal{F}_{t_i}]$ with $Z_{t_i}$. 

\end{thm}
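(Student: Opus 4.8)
The statement collects results that are, in essence, established in \cite{bender2008time}, so the cleanest route is to cite them directly; nonetheless, the plan I would follow to reconstruct them is a decoupling-field (Markovian iteration) argument. The first step is to set up a fixed-point map $\Phi$ on the space of functions $u(t,x)$ that are measurable in $t$ and Lipschitz in $x$ with squared Lipschitz constant at most $L_1$. Given such a $u$, substituting $Y_s = u(s,X_s)$ into \eqref{eq:FBSDE_forward} produces a \emph{decoupled} forward SDE with coefficients $b(s,x,u(s,x))$ and $\sigma(s,x,u(s,x))$; these are Lipschitz in $x$ by Assumption~\ref{assu1}(ii), so the SDE has a unique strong solution $X^{u}$. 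Solving the now-decoupled backward equation with terminal value $g(X_T^{u})$ yields $(Y^{u},Z^{u})$, and I would define $\Phi(u)(t,x)$ to be the value at time $t$ of the backward solution initialized at $(t,x)$. A fixed point of $\Phi$ is exactly the decoupling field $u$ with $Y_t = u(t,X_t)$.

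The core of the argument is an a~priori difference estimate. Taking two space points (or two candidate fields) and applying It\^o's formula to the squared difference of the associated states, I would absorb the drift and diffusion increments using the one-sided constants $k_b,k_f$ from Assumption~\ref{assu1}(i) and the Lipschitz constants from Assumption~\ref{assu1}(ii), completing the square with the free multiplier $\lambda_1$ that appears in the definition of $c$. A Gronwall step then shows simultaneously that the Lipschitz constant of $\Phi(u)$ in $x$ does not exceed $L_1$ and that $\Phi$ is a contraction with factor $c$ in a suitable weighted $\sup_t L^2$ norm. This is precisely the place where Assumption~\ref{assu3}, in the quantitative form $L_0 < e^{-1}$ and $c<1$, is indispensable: it keeps the iterated Lipschitz constant from blowing up and makes $\Phi$ a genuine contraction, so the Banach fixed-point theorem delivers a unique $u$ obeying point~1. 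Point~2 (the H\"older-$\tfrac12$-in-time bound) then follows from standard $L^2$ moment and increment estimates for $X^{u}$ under the linear-growth bounds of Assumption~\ref{assu1}(iii) together with Assumption~\ref{assu2}, fed through the backward equation.

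With $u$ in hand, points~3 and~4 are comparatively routine. For point~4 the forward SDE is now driven by Lipschitz coefficients $b(s,\cdot,u(s,\cdot))$, $\sigma(s,\cdot,u(s,\cdot))$ and hence has a unique strong solution $X$; setting $Y_t := u(t,X_t)$ and reading off $Z_t$ from the martingale representation of $Y$ produces a solution of the coupled system \eqref{eq:FBSDE_forward}\eqref{eq:FBSDE_backward}, while uniqueness is inherited from the contraction. For point~3 I would establish the dynamic programming principle for the decoupled system and derive the viscosity sub- and supersolution inequalities in the usual way, by testing $u$ against smooth functions touching it from above and below and passing to the limit in the DPP.

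The remaining ``furthermore'' claim, the path-regularity estimate \eqref{eq:path_regularity}, splits into an easy and a hard part. The $X$ and $Y$ contributions are the classical $O(h)$ bounds, following from the $L^2$ H\"older-$\tfrac12$ regularity of $t\mapsto X_t$ and the continuity of $u$ just established. The $Z$ contribution, with $\tilde Z_t = h^{-1}E[\int_{t_i}^{t_{i+1}} Z_t\,\rmd t \mid \mathcal F_{t_i}]$ chosen as the $L^2$-projection of $Z$ onto $\mathcal F_{t_i}$-measurable variables, is where I expect the main obstacle: the bound $\int_0^T E|Z_t-\tilde Z_t|^2\,\rmd t \le C(1+E|\xi|^2)h$ is the $L^2$-regularity of the control process and does not follow from Gronwall alone. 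Proving it requires the (a.e.) representation $Z_t = \sigma\transpose(t,X_t,u)\nabla_x u(t,X_t)$ in a weak sense and Zhang's $L^2$-regularity technique for $Z$, which upgrades the path regularity of $(X,Y)$ to the stated order-$h$ estimate; this is the one ingredient that genuinely exceeds the elementary fixed-point and Gronwall machinery.
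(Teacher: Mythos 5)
Your proposal matches the paper exactly in spirit: the paper gives no proof of Theorem~\ref{thm4} at all, stating explicitly that it is imported from \cite{bender2008time} under Assumptions~\ref{assu1}--\ref{assu3}, which is precisely your recommended route of direct citation. Your reconstruction sketch is moreover faithful to how that reference actually argues --- Markovian iteration on the decoupling field with contraction under the quantitative conditions $L_0 < e^{-1}$, $c<1$, and Zhang's $L^2$-regularity technique for the $Z$-component in the path-regularity bound \eqref{eq:path_regularity} --- so there is nothing to correct.
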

\begin{rem}
Several conditions can guarantee $Z_t$ admits a c\`adl\`ag version, such as $m = d$ and $\sigma\sigma\transpose \ge \delta I$ with some $\delta > 0$, see e.g., \cite{zhang2004numerical}. 
\end{rem}
\medskip

\begin{thm}
\label{thm5}
Under Assumptions \ref{assu1}, \ref{assu2}, and \ref{assu3}, for sufficiently small h, the following discrete-time equation ($0 \le i \le N-1$)
\begin{align}
\begin{dcases}
    \overline{X}_0^{\pi} = \xi,   \\
    \overline{X}_{t_{i+1}}^{\pi} = \overline{X}_{t_i}^{\pi} + b(t_i,\overline{X}_{t_i}^{\pi},\overline{Y}_{t_i}^{\pi})h + \sigma(t_i,\overline{X}_{t_i}^{\pi},\overline{Y}_{t_i}^{\pi})\Delta W_i, \\
    \overline{Y}_T^{\pi} = g(\overline{X}_T^{\pi}),  \\
    \overline{Z}_{t_i}^{\pi} = \frac{1}{h}E[\overline{Y}_{t_{i+1}}^{\pi}\Delta W_i|\mathcal{F}_{t_i}],  \\
    \overline{Y}_{t_i}^{\pi} = E[\overline{Y}_{t_{i+1}}^\pi + f(t_i,\overline{X}_{t_i}^{\pi},\overline{Y}_{t_i}^\pi,\overline{Z}_{t_{i}}^{\pi})h|\mathcal{F}_{t_i}],
\end{dcases}
\label{eq:discrete_FBSDE}
\end{align}
has a solution $(\overline{X}_{t_i}^\pi, \overline{Y}_{t_i}^\pi, \overline{Z}_{t_i}^\pi)$ such that $\overline{X}_{t_i}^\pi \in L^2(\Omega,\mathcal{F}_{t_i},\mathbb{P})$ and
\begin{equation}
\sup_{t\in[0,T]} (E|X_t - \overline{X}_t^\pi|^2 + E|Y_t - \overline{Y}_t^\pi|^2) + \int_{0}^{T} E|Z_t - \overline{Z}_{t}^\pi|^2\, \mathrm{d}t \le C(1 + E|\xi|^2)h, 
\label{eq:discrete_convergece}
\end{equation}
where $\overline{X}_t^\pi = \overline{X}_{t_i}^\pi$, $\overline{Y}_t ^\pi= \overline{Y}_{t_i}^\pi$, $\overline{Z}_t^\pi = \overline{Z}_{t_i}^\pi$ for $t \in [t_i,t_{i+1})$, and C is a constant depending on $\mathscr{L}$ and T.
\end{thm}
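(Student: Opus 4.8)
The plan is to prove the two claims of Theorem \ref{thm5} in turn: first that the implicit system \eqref{eq:discrete_FBSDE} admits a solution for small $h$, and second that this solution obeys the rate-$h$ bound \eqref{eq:discrete_convergece}. The difficulty present in both parts is the \emph{two-way coupling}: the forward update of $\overline{X}^\pi$ requires $\overline{Y}^\pi$ through $b,\sigma$, while the backward update of $\overline{Y}^\pi$ requires the future $\overline{X}^\pi$ through $g$ and $f$, so neither equation can be solved in isolation as in the decoupled case.

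For existence I would exploit the Markov structure of $\{\overline{X}_{t_i}^\pi,\overline{Y}_{t_i}^\pi\}$ and construct a discrete decoupling field by backward induction, writing $\overline{Y}_{t_i}^\pi = u_i^\pi(\overline{X}_{t_i}^\pi)$ with $u_N^\pi = g$. Given $u_{i+1}^\pi$, both the forward Euler step and the regression value $\overline{Z}_{t_i}^\pi = h^{-1}E[\overline{Y}_{t_{i+1}}^\pi\Delta W_i|\mathcal{F}_{t_i}]$ become explicit functions of $\overline{X}_{t_i}^\pi$ and of the still-unknown value $\overline{Y}_{t_i}^\pi$; substituting these into the implicit last line of \eqref{eq:discrete_FBSDE} turns it into a pointwise fixed-point equation $y = \Phi(x,y)$ defining $u_i^\pi(x)$. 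By the Lipschitz bounds of Assumption \ref{assu1}, $\Phi$ is a contraction in $y$ once $h$ is small, which produces $u_i^\pi$. The nontrivial point is to propagate a \emph{uniform} Lipschitz constant on $u_i^\pi$ across all $N$ steps, and this is exactly where the quantitative bounds $L_0<e^{-1}$ and $c<1$ of Assumption \ref{assu3} are needed, so that the Lipschitz constant stays controlled by $L_1$ independently of the partition.

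For the convergence estimate I would first invoke the path regularity \eqref{eq:path_regularity} of Theorem \ref{thm4} to reduce the problem to comparing the scheme against the time-frozen true solution $(X_{t_i},Y_{t_i},\tilde{Z}_{t_i})$, since the continuous-to-piecewise discrepancy is already $O(h)$; by the triangle inequality it then suffices to bound the scheme errors $\delta X_i = X_{t_i}-\overline{X}_{t_i}^\pi$, $\delta Y_i = Y_{t_i}-\overline{Y}_{t_i}^\pi$, $\delta Z_i = \tilde{Z}_{t_i}-\overline{Z}_{t_i}^\pi$. On the forward side, subtracting the two dynamics, using the Lipschitz estimates, the H\"older-$\tfrac12$ regularity in $t$ of Assumption \ref{assu2}, and a discrete Gronwall inequality yields $\max_i E|\delta X_i|^2 \le C\big(\sum_i E|\delta Y_i|^2 h + h\big)$. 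On the backward side, conditioning on $\mathcal{F}_{t_i}$ and using the It\^o isometry to extract the $Z$-error produces $\max_i E|\delta Y_i|^2 + \sum_i E|\delta Z_i|^2 h \le C\big(E|g(X_T)-g(\overline{X}_T^\pi)|^2 + \sum_i E|\delta X_i|^2 h + h\big)$.

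These two inequalities are circularly coupled, so a naive Gronwall argument does not close, and I expect this to be the technical heart of the proof. To break the circularity I would use the decoupling representations $Y_{t_i}=u(t_i,X_{t_i})$ and $\overline{Y}_{t_i}^\pi=u_i^\pi(\overline{X}_{t_i}^\pi)$, splitting $\delta Y_i$ into a part controlled through the Lipschitz bound $E|\delta Y_i|^2 \le 2L_1 E|\delta X_i|^2 + (\text{decoupling-field error})$ of Theorem \ref{thm4}(1) and a part measuring the distance between the continuous and discrete fields. Feeding this back into the forward estimate, the coupled system closes precisely when the composite contraction factor assembled from $b_y,\sigma_y,g_x,f_x$ and $L_1$ stays below one, which is exactly the content of \eqref{eq:assu3_ineq}; the terminal term is absorbed via $E|g(X_T)-g(\overline{X}_T^\pi)|^2 \le g_x E|\delta X_N|^2$, and the field error is folded into the same Gronwall constant. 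The delicate bookkeeping is to keep the final constant $C$ depending only on $\mathscr{L}$ and $T$, and independent of the partition, while verifying that the contraction factor is genuinely $<1$ under Assumption \ref{assu3}; this produces the bound $C(1+E|\xi|^2)h$.
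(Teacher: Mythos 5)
Your plan is essentially the proof the paper relies on: the paper does not prove Theorem~\ref{thm5} itself but defers to \cite{bender2008time}, whose argument is exactly your combination of a discrete decoupling field $\overline{Y}_{t_i}^\pi = u_i^\pi(\overline{X}_{t_i}^\pi)$ built by backward induction, propagation of a uniform Lipschitz constant (controlled by $L_1$) under the quantitative weak-coupling/monotonicity bounds $L_0<e^{-1}$ and $c<1$, path regularity to reduce to grid comparison, and coupled forward/backward Gronwall estimates closed by the smallness of the composite coupling factor. Your per-step fixed-point contraction in $y$ for small $h$ is precisely the modification the paper's remark indicates is needed to pass from the explicit scheme proved in \cite{bender2008time} to the implicit scheme stated here.
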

\begin{rem}
In \cite{bender2008time}, the above result (existence and convergence) is proved for the explicit scheme, which is formulated as replacing  $f(t_i,\overline{X}_{t_i}^{\pi},\overline{Y}_{t_i}^\pi,\overline{Z}_{t_{i}}^{\pi})$ with $ f(t_i,\overline{X}_{t_i}^{\pi},\overline{Y}_{t_{i+1}}^\pi,\overline{Z}_{t_{i}}^{\pi})$ in the last equation of (\ref{eq:discrete_FBSDE}). 
The same techniques can be used to prove the implicit scheme, as we state in Theorem~\ref{thm5}.
\end{rem}
\medskip

Finally, to make sure the system in (2.3) is well-defined, we restrict our parametric function spaces $\mathcal{N}'_0$ and $\mathcal{N}_i$ as in Assumption \ref{assu4} below. 
Note that neural networks with common activation functions, including ReLU and sigmoid function, satisfy this assumption.
Under Assumption \ref{assu1} and \ref{assu4}, one can easily prove by induction that 
$\{X_{t_i}^\pi\}_{0\le i \le N}$, $\{Y_{t_i}^\pi\}_{0 \le i \le N}$ and $\{Z_{t_i}^\pi\}_{ 0 \le i \le N-1}$ defined in \eqref{eq:FB_system1} are all measurable and square-integrable random variables. 

\begin{assu}\label{assu4}
$\mathcal{N}_0^{'}$ and $\mathcal{N}_i (0 \le i \le N-1)$ are subsets of measurable functions from $\bR^m$ to $\bR$ and $\bR^m\times \bR$ to $\bR^d$ with linear growth, namely, $\mu_0^\pi$ and $\{\phi_i^\pi\}_{0 \le i \le N-1}$ in \eqref{eq:FB_system1} satisfy $|\mu_0^\pi(x)|^2 \le A'_0+ B'_0|x|^2$ and $|\phi_i^\pi(x,y)|^2 \le A_i + B_i|x|^2 + C_i|y|^2$ for $0 \le i \le N-1$. 
\end{assu}

\section{A Posteriori Estimation of the Simulation Error}
We prove Theorem \ref{thm:main1} in this section. Comparing the statements of Theorem~\ref{thm:main1} and Theorem~\ref{thm5}, we wish to bound the differences between $(X_{t_i}^\pi, Y_{t_i}^\pi, Z_{t_i}^\pi)$ and $(\overline{X}_{t_i}^\pi, \overline{Y}_{t_i}^\pi, \overline{Z}_{t_i}^\pi)$
with the objective function $E|g(X_T^\pi) - Y_T^\pi|^2$.
Recalling the definition of the system of equations~\eqref{eq:FB_system1}, we have
\begin{empheq}[left=\empheqlbrace]{align}
X_{t_{i+1}}^{\pi} &= X_{t_i}^{\pi} + b(t_i,X_{t_i}^{\pi},Y_{t_i}^{\pi})h + \sigma(t_i,X_{t_i}^{\pi},Y_{t_i}^{\pi})\Delta W_i,  \\
Y_{t_{i+1}}^{\pi} &= Y_{t_i}^{\pi} - f(t_i,X_{t_i}^{\pi},Y_{t_i}^\pi,Z_{t_i}^\pi)h + (Z_{t_i}^\pi)\transpose\Delta W_i. \label{eq:discrete_Y}
\end{empheq}
Taking the expectation $E[\cdot|\mathcal{F}_{t_i}]$ on both sides of \eqref{eq:discrete_Y}, we obtain
\begin{equation*}
Y_{t_i}^{\pi} = E[Y_{t_{i+1}}^{\pi} + f(t_i,X_{t_i}^{\pi},Y_{t_i}^\pi,Z_{t_i}^\pi)h|\mathcal{F}_{t_i}].
\end{equation*}
Right multiplying $(\Delta W_i)\transpose$ on both sides of \eqref{eq:discrete_Y} and taking the expectation $E[\cdot|\mathcal{F}_{t_i}]$ again, we obtain
\begin{equation*}
Z_{t_i}^{\pi} = \frac{1}{h}[Y_{t_{i+1}}^{\pi}\Delta W_i|\mathcal{F}_{t_i}].
\end{equation*}

The above observation motivates us to consider the following system of equations
\begin{align}
\begin{dcases}
    X_0^{\pi} = \xi, \\
    X_{t_{i+1}}^{\pi} = X_{t_i}^{\pi} + b(t_i,X_{t_i}^{\pi},Y_{t_i}^{\pi})h + \sigma(t_i,X_{t_i}^{\pi},Y_{t_i}^{\pi})\Delta W_i, \\
    Z_{t_i}^{\pi} = \frac{1}{h}E[Y_{t_{i+1}}^{\pi}\Delta W_i|\mathcal{F}_{t_i}], \\
    Y_{t_i}^{\pi} = E[{Y}_{t_{i+1}}^{\pi} + f(t_i,{X}_{t_i}^{\pi},{Y}_{t_i}^{\pi},{Z}_{t_{i}}^{\pi})h|\mathcal{F}_{t_i}].
\end{dcases}
\label{eq:FB_system_analyzed}
\end{align}
Note that~\eqref{eq:FB_system_analyzed} is defined just like the FBSDEs \eqref{eq:FBSDE_forward}\eqref{eq:FBSDE_backward}, where the $X$ component is defined forwardly and the $Y,Z$ components are defined backwardly. However, since we do not specify the terminal condition of $Y_{T}^\pi$, the system of equations~\eqref{eq:FB_system_analyzed} has infinitely many solutions. The following lemma gives an estimate of the difference between two such solutions.

\begin{lem} \label{lem1}
For $j = 1, 2$, suppose $(\{X_{t_i}^{\pi,j}\}_{0\le i \le N}, \{Y_{t_i}^{\pi,j}\}_{0\le i \le N}, \{Z_{t_i}^{\pi,j}\}_{0\le i \le N-1})$ are two solutions of~\eqref{eq:FB_system_analyzed},
with $X_{t_i}^{\pi,j}, Y_{t_i}^{\pi,j} \in L^2(\Omega,\mathcal{F}_{t_i},\mathbb{P})$, $ 0 \le i \le N$. 
For any $\lambda_1 > 0, \lambda_2 \ge f_z$, and sufficiently small h, denote
\begin{equation}
\begin{aligned}
A_1 &\coloneqq 2k_b + \lambda_1+ \sigma_x + Kh, \\
A_2 &\coloneqq(\lambda_1^{-1}+h)b_y + \sigma_y, \\
A_3 &\coloneqq -\frac{\ln[1-(2k_f+\lambda_2)h]}{h}, \\
A_4 &\coloneqq \frac{f_x}{[1 - (2k_f + \lambda_2)h]\lambda_2}.
\end{aligned}
\label{eq:def_Ai}
\end{equation}
Let $\delta X_i = X_{t_i}^{\pi,1} - X_{t_i}^{\pi,2}, \delta Y_i = Y_{t_i}^{\pi,1} - Y_{t_i}^{\pi,2}$, then we have, for $ 0 \le n \le N$, 
\begin{align*}
E|\delta X_n|^2 &\le A_2\sum_{i = 0}^{n-1}e^{A_1(n-i-1)h}E|\delta Y_i|^2h, \\
E|\delta Y_n|^2 &\le e^{A_3(N-n)h}E|\delta Y_N|^2 + A_4\sum_{i = n}^{N-1}e^{A_3(i-n)h}E|\delta X_i|^2h.
\end{align*}
\end{lem}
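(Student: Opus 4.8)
The plan is to prove the two bounds separately, each by establishing a one-step estimate and then iterating it as a discrete Gr\"onwall inequality. Since both solutions share the initial condition $X_0^{\pi,j}=\xi$, we have $\delta X_0=0$, so the forward bound will follow by summing a recursion of the form $E|\delta X_{i+1}|^2 \le (1+A_1 h)E|\delta X_i|^2 + A_2 h\,E|\delta Y_i|^2$ and using $(1+A_1 h)^{k}\le e^{A_1 k h}$. The backward bound will come from a recursion run from index $N$ down to $n$, with the factor $e^{A_3 h}$ arising as $[1-(2k_f+\lambda_2)h]^{-1}$; the terminal term $E|\delta Y_N|^2$ is simply carried along since the terminal data of $Y^{\pi}$ is left free in \eqref{eq:FB_system_analyzed}. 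I would treat the forward ($\delta X$) and backward ($\delta Y,\delta Z$) halves independently, since each recursion feeds the other's driving term but can be derived in isolation.

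For the forward estimate I would subtract the two $X$-recursions and expand $E|\delta X_{i+1}|^2$, writing $\delta X_{i+1}=\delta X_i+\delta b_i h+\delta\sigma_i\Delta W_i$ with $\delta b_i,\delta\sigma_i$ the corresponding increments of $b,\sigma$. Conditioning on $\mathcal{F}_{t_i}$ and using that $\Delta W_i$ is independent of $\mathcal{F}_{t_i}$ with mean zero annihilates the martingale cross term and evaluates the diffusion contribution as $h\,E\|\delta\sigma_i\|^2$. I would then apply the monotonicity bound $\delta X_i\transpose[b(t_i,X^1,Y^1)-b(t_i,X^2,Y^1)]\le k_b|\delta X_i|^2$ on the $x$-part of the drift, bound the $y$-part by $\sqrt{b_y}\,|\delta X_i||\delta Y_i|$ and split it with a Young inequality of parameter $\lambda_1$, and control $E\|\delta\sigma_i\|^2$ and the $O(h^2)$ term $h^2 E|\delta b_i|^2$ by the Lipschitz constants $\sigma_x,\sigma_y,K,b_y$. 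Collecting the coefficients of $E|\delta X_i|^2$ and $E|\delta Y_i|^2$ reproduces exactly $A_1=2k_b+\lambda_1+\sigma_x+Kh$ and $A_2=(\lambda_1^{-1}+h)b_y+\sigma_y$, and iterating from $\delta X_0=0$ gives the first claimed inequality.

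For the backward estimate I would start from the implicit relation $\delta Y_i=E[\delta Y_{i+1}\mid\mathcal{F}_{t_i}]+\delta f_i h$ and expand, using $|\delta Y_i|^2\le |E[\delta Y_{i+1}\mid\mathcal F_{t_i}]|^2+2h\,\delta Y_i\,\delta f_i$. The heart of the argument, and the step I expect to be the main obstacle, is controlling $\delta Z_i$ in a way that stays independent of the dimension $d$: a naive conditional Cauchy--Schwarz on $h\delta Z_i=E[(\delta Y_{i+1}-E[\delta Y_{i+1}\mid\mathcal F_{t_i}])\Delta W_i\mid\mathcal F_{t_i}]$ loses a factor of $d$. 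Instead I would view $(\delta Z_i)\transpose\Delta W_i$ as the conditional $L^2$-projection of the martingale increment $\delta Y_{i+1}-E[\delta Y_{i+1}\mid\mathcal F_{t_i}]$ onto $\Delta W_i$, which by orthogonality yields the dimension-free bound $E\,|E[\delta Y_{i+1}\mid\mathcal F_{t_i}]|^2 \le E|\delta Y_{i+1}|^2 - h\,E|\delta Z_i|^2$. I would then split $\delta f_i$ into its $x$-, $y$-, and $z$-parts: the $y$-part gives $2k_f h\,E|\delta Y_i|^2$ by monotonicity, the $x$-part gives $\lambda_2 h\,E|\delta Y_i|^2+\tfrac{f_x}{\lambda_2}h\,E|\delta X_i|^2$ by a Young split, and the $z$-part is absorbed into the negative bonus $-h\,E|\delta Z_i|^2$, which is where the hypothesis $\lambda_2\ge f_z$ is used. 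Rearranging (legitimate for $h$ small enough that $1-(2k_f+\lambda_2)h>0$) and identifying $e^{A_3 h}=[1-(2k_f+\lambda_2)h]^{-1}$ produces the one-step recursion $E|\delta Y_i|^2\le e^{A_3h}E|\delta Y_{i+1}|^2+A_4 e^{A_3 h}h\,E|\delta X_i|^2$, and backward iteration from $N$ to $n$ gives the second inequality. The delicate bookkeeping is thus concentrated in the $Z$-term: securing the projection bound and folding the $z$-Lipschitz constant $f_z$ into the parameter $\lambda_2$ via $\lambda_2\ge f_z$, together with the implicitness of the $Y$-scheme which forces ``sufficiently small $h$.''
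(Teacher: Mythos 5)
Your proposal is correct and follows the same overall skeleton as the paper's proof---one-step estimates with the $k_b$, $k_f$ monotonicity on the diagonal parts of the drift/driver, Young splits with parameters $\lambda_1$, $\lambda_2$ on the off-diagonal parts, and discrete Gr\"onwall iteration from $\delta X_0=0$ forward and from the free terminal datum backward; your constants $A_1,A_2,A_3,A_4$ come out identically. The genuine difference is in the treatment of the $Z$-component, which you correctly identify as the crux. The paper invokes the martingale representation theorem to write $\delta Y_{i+1}=E[\delta Y_{i+1}|\mathcal{F}_{t_i}]+\int_{t_i}^{t_{i+1}}(\delta Z_t)\transpose\,\mathrm{d}W_t$, uses It\^o isometry to get $E|\delta Y_{i+1}|^2=E|\delta Y_i-\delta f_i h|^2+\int_{t_i}^{t_{i+1}}E|\delta Z_t|^2\,\mathrm{d}t$, and then needs a separate lemma (an It\^o-formula computation showing $\delta Z_i=h^{-1}E[\int_{t_i}^{t_{i+1}}\delta Z_t\,\mathrm{d}t|\mathcal{F}_{t_i}]$) plus Jensen/Cauchy--Schwarz to conclude $hE|\delta Z_i|^2\le\int_{t_i}^{t_{i+1}}E|\delta Z_t|^2\,\mathrm{d}t$. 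You instead bypass the representation theorem entirely: since $\Delta W_i$ is independent of $\mathcal{F}_{t_i}$ with $E[\Delta W_i\Delta W_i\transpose|\mathcal{F}_{t_i}]=hI$, the quantity $(\delta Z_i)\transpose\Delta W_i$ is exactly the conditional $L^2$-projection of $\delta Y_{i+1}-E[\delta Y_{i+1}|\mathcal{F}_{t_i}]$ onto the span of the components of $\Delta W_i$, and conditional Pythagoras yields the dimension-free bound $E|E[\delta Y_{i+1}|\mathcal{F}_{t_i}]|^2\le E|\delta Y_{i+1}|^2-hE|\delta Z_i|^2$ directly---which is precisely the inequality the paper's longer chain produces. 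Your route is more elementary (no martingale representation, no It\^o calculus, no auxiliary continuous-time process) and would in fact generalize beyond Brownian filtrations to any discrete scheme driven by independent increments with identity covariance; the paper's route has the advantage of setting up the continuous-time process $\delta Z_t$, but for this lemma alone nothing is lost by your shortcut. One bookkeeping slip: your backward recursion $E|\delta Y_i|^2\le e^{A_3h}E|\delta Y_{i+1}|^2+A_4e^{A_3h}h\,E|\delta X_i|^2$ double-counts a factor, since the defined $A_4=f_x\lambda_2^{-1}[1-(2k_f+\lambda_2)h]^{-1}$ already absorbs $e^{A_3h}=[1-(2k_f+\lambda_2)h]^{-1}$; the rearranged one-step estimate gives $E|\delta Y_i|^2\le e^{A_3h}E|\delta Y_{i+1}|^2+A_4h\,E|\delta X_i|^2$, which iterates to exactly the stated bound, so the slip is harmless (your version is merely slightly weaker).
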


To prove Lemma \ref{lem1}, we need the following lemma to handle the $Z$ component.
\begin{lem}\label{lem2}
Let $0 \le s_1 <  s_2$, given $Q \in L^2(\Omega,\mathcal{F}_{s_2},\mathbb{P})$, by the martingale representation theorem, there exists an $\mathcal{F}_t$-adapted process $\{H_s\}_{s_1 \le s \le s_2}$ such that $\int_{s_1}^{s_2} E|H_s|^2\,\mathrm{d} s < \infty$ and $Q = E[Q|\mathcal{F}_{s_1}] + \int_{s_1}^{s_2} H_s \,\mathrm{d} W_s$. 
Then we have $E[Q(W_{s_2} - W_{s_1})|\mathcal{F}_{s_1}] = E[\int_{s_1}^{s_2} H_s \, \mathrm{d}s|\mathcal{F}_{s_1}]$.
\end{lem}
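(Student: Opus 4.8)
The plan is to substitute the martingale representation of $Q$ directly into the quantity $Q(W_{s_2}-W_{s_1})$ and then take $E[\,\cdot\,|\mathcal{F}_{s_1}]$, splitting the result into a \emph{drift part} that vanishes and a \emph{martingale part} to which a conditional It\^o isometry applies.

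First I would write
\[
Q(W_{s_2}-W_{s_1}) = E[Q|\mathcal{F}_{s_1}]\,(W_{s_2}-W_{s_1}) + \Big(\int_{s_1}^{s_2} H_s\,\mathrm{d}W_s\Big)(W_{s_2}-W_{s_1})
\]
and apply $E[\,\cdot\,|\mathcal{F}_{s_1}]$ to both sides. Since $E[Q|\mathcal{F}_{s_1}]$ is $\mathcal{F}_{s_1}$-measurable and $E[W_{s_2}-W_{s_1}\,|\,\mathcal{F}_{s_1}]=0$ by the zero mean and independence of Brownian increments, the first term drops out (integrability of the product being guaranteed by $Q\in L^2$).

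For the second term I would rewrite the increment as a trivial stochastic integral, $W_{s_2}-W_{s_1}=\int_{s_1}^{s_2}\mathrm{d}W_s$, turning the product into a product of two It\^o integrals over $[s_1,s_2]$. Working componentwise (recall $W$ is $d$-dimensional), the $k$-th component is $\big(\int_{s_1}^{s_2}H_s\,\mathrm{d}W_s\big)\big(\int_{s_1}^{s_2}\mathrm{d}W_s^k\big)$, and the conditional It\^o isometry—equivalently, the fact that the product of two mean-zero square-integrable martingales minus their quadratic covariation is a martingale vanishing at $s_1$—gives
\[
E\Big[\Big(\int_{s_1}^{s_2}H_s\,\mathrm{d}W_s\Big)\Big(\int_{s_1}^{s_2}\mathrm{d}W_s^k\Big)\,\Big|\,\mathcal{F}_{s_1}\Big] = E\Big[\int_{s_1}^{s_2}H_s^k\,\mathrm{d}s\,\Big|\,\mathcal{F}_{s_1}\Big],
\]
the cross terms between distinct Brownian coordinates vanishing by orthogonality (zero quadratic covariation). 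Assembling the $d$ components yields exactly $E[\int_{s_1}^{s_2}H_s\,\mathrm{d}s\,|\,\mathcal{F}_{s_1}]$, which is the claim.

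The computation is short; the only points requiring care are the justification of the conditional It\^o isometry with the attendant integrability, both of which follow routinely from $Q\in L^2$ and $\int_{s_1}^{s_2}E|H_s|^2\,\mathrm{d}s<\infty$, together with the vector bookkeeping across the $d$ independent Brownian components. I do not anticipate a genuine obstacle: the lemma is a standard identity whose purpose is to convert the $Z$-defining conditional expectation $\frac{1}{h}E[Y^\pi_{t_{i+1}}\Delta W_i|\mathcal{F}_{t_i}]$ into an integral of the representation integrand, which is precisely what is needed to control the $Z$-component in the proof of Lemma~\ref{lem1}.
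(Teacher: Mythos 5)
Your proof is correct and takes essentially the same route as the paper's: the paper applies It\^o's formula to the auxiliary product process $Q_s = \big(E[Q|\mathcal{F}_{s_1}] + \int_{s_1}^s H_t\,\mathrm{d}W_t\big)(W_s - W_{s_1})$ and then takes $E[\,\cdot\,|\mathcal{F}_{s_1}]$, which is exactly your decomposition plus the fact that $MN - \langle M,N\rangle$ is a martingale, written in differential form. If anything, your appeal to that true-martingale property for $L^2$ martingales treats the integrability of the cross term $\int_{s_1}^{s_2}(W_s-W_{s_1})H_s\,\mathrm{d}W_s$ slightly more carefully than the paper's bare It\^o computation, which tacitly assumes the stochastic-integral terms are true martingales.
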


\begin{proof}
Consider the auxiliary stochastic process $Q_s  = (E[Q|\mathcal{F}_{s_1}] + \int_{s_1}^s H_t \,\mathrm{d} W_t)(W_s - W_{s_1})$ for $s \in [s_1,s_2]$. By It\^{o}'s formula,
\begin{equation*}
\rmd Q_s = (W_s - W_{s_1})H_s \,\mathrm{d} W_s + (E[Q|\mathcal{F}_{s_1}] + \int_{s_1}^s H_t \,\mathrm{d} W_t) \,\mathrm{d} W_s + H_s \,\mathrm{d} s.
\end{equation*}
Noting that $Q_{s_1}=0$, we have 
\begin{equation*}
E[Q(W_{s_2} - W_{s_1})|\mathcal{F}_{s_1}] = E[Q_{s_2}|\mathcal{F}_{s_1}] = E[\int_{s_1}^{s_2}H_s \,\mathrm{d} s|\mathcal{F}_{s_1}].
\end{equation*}
\end{proof}

\begin{proof}[Proof of Lemma \ref{lem1}]
Let 
\begin{align*}
\delta Z_i &= Z_{t_i}^{\pi,1} - Z_{t_i}^{\pi,2},\\
\delta b_i &= b(t_i,X_{t_i}^{\pi,1},Y_{t_i}^{\pi,1}) - b(t_i,X_{t_i}^{\pi,2},Y_{t_i}^{\pi,2}), \\
\delta \sigma_i &= \sigma(t_i,X_{t_i}^{\pi,1},Y_{t_i}^{\pi,1}) - \sigma(t_i,X_{t_i}^{\pi,2},Y_{t_i}^{\pi,2}), \\
\delta f_i &= f(t_i,X_{t_i}^{\pi,1},Y_{t_i}^{\pi,1},Z_{t_i}^{\pi,1}) - f(t_i,X_{t_i}^{\pi,2},Y_{t_i}^{\pi,2},Z_{t_i}^{\pi,2}).
\end{align*}
Then we have
\begin{align}
\delta X_{i+1} &= \delta X_i + \delta b_i h + \delta \sigma_i \Delta W_i, \label{equa5} \\
\delta Z_i &= \frac{1}{h}E[\delta Y_{i+1}\Delta W_i|\mathcal{F}_{t_i}], \label{equa7}
 \\
\delta Y_i &= E[\delta Y_{i+1} + \delta f_i h|\mathcal{F}_{t_i}].
\end{align}
By the martingale representation theorem, there exists an $\mathscr{F}_t$-adapted square-integrable process $\{\delta Z_t\}_{t_i \le t \le t_{i+1}}$ such that $$\delta Y_{i+1} = E[\delta Y_{i+1}|\mathcal{F}_{t_i}] + \int_{t_i}^{t_{i+1}}(\delta Z_t)\transpose\, \mathrm{d}W_t,$$ or, equivalently,
\begin{align}
\delta Y_{i+1} &= \delta Y_{i} - \delta f_i h + \int_{t_i}^{t_{i+1}}(\delta Z_t)\transpose \, \mathrm{d}W_t. 
\label{equa6} 
\end{align}
From equations~\eqref{equa5} and~\eqref{equa6}, noting that $\delta X_i$, $\delta Y_i$, $\delta b_i$, $\delta \sigma_i$, and $\delta f_i$ are all $\mathcal{F}_{t_i}$-measurable, and $E[\Delta W_i|\mathcal{F}_{t_i}] = 0$, $E[\int_{t_i}^{t_{i+1}} (\delta Z_t)\transpose \, \mathrm{d}W_t|\mathcal{F}_{t_i}] = 0$, 
we have 
\begin{align}
E|\delta X_{i+1}|^2 &= E|\delta X_i + \delta b_i h|^2 + E[(\Delta W_i)\transpose(\delta \sigma_i)\transpose\delta \sigma_i \Delta W_i] \notag \\
&= E|\delta X_i + \delta b_i h|^2 + hE\|\delta \sigma_i\|^2, \label{equa8}\\
E|\delta Y_{i+1}|^2 &= E|\delta Y_i - \delta f_i h|^2 + \int_{t_i}^{t_{i+1}}E|\delta Z_t|^2 \, \mathrm{d}t. \label{equa9}
\end{align}

From equation \eqref{equa8}, by Assumptions \ref{assu1}, \ref{assu2} and the root-mean square and geometric mean inequality (RMS-GM inequality), for any $\lambda_1 > 0$, we have
\begin{align*}
& E|\delta X_{i+1}|^2 \notag \\
=~& E|\delta X_i|^2  + E|\delta b_i|^2 h^2 + hE\|\delta \sigma_i\|^2 \notag \\
& + 2hE[(b(t_i,X_{t_i}^{\pi,1},Y_{t_i}^{\pi,1}) -b(t_i,X_{t_i}^{\pi,2},Y_{t_i}^{\pi,1}))\transpose\delta X_i ] \notag \\
& + 2hE[(b(t_i,X_{t_i}^{\pi,2},Y_{t_i}^{\pi,1}) -b(t_i,X_{t_i}^{\pi,2},Y_{t_i}^{\pi,2}))\transpose\delta X_i ] \notag \\
\le~ & E|\delta X_i|^2 + (KE|\delta X_i|^2 + b_y E|\delta Y_i|^2)h^2 + 2k_bh E|\delta X_i|^2 \notag \\
& +(\lambda_1 E|\delta X_i|^2 + \lambda_1^{-1}b_y E|\delta Y_i|^2)h + (\sigma_x E|\delta X_i|^2 + \sigma_y E|\delta Y_i|^2)h\notag \\
=~& [1 + (2k_b + \lambda_1+ \sigma_x + Kh)h]E|\delta X_i|^2 + [(\lambda_1^{-1}+h)b_y + \sigma_y]E|\delta Y_i|^2h.
\end{align*}
Recall $A_1 = 2k_b + \lambda_1+ \sigma_x + Kh$, $A_2 =(\lambda_1^{-1}+h)b_y + \sigma_y$, $E|\delta X_0|^2 = 0$. By induction we can obtain that, for $ 0 \le n \le N$,
\begin{equation*}
E|\delta X_n|^2 \le A_2\sum_{i = 0}^{n-1}e^{A_1(n-i-1)h}E|\delta Y_i|^2h.
\end{equation*}

Similarly, from equation \eqref{equa9}, for any $\lambda_2 > 0$, we have
\begin{align}
&E|\delta Y_{i+1}|^2 \notag \\
\ge~& E|\delta Y_i|^2 +\int_{t_i}^{t_{i+1}}E|\delta Z_t|^2 \, \mathrm{d}t \notag \\
& - 2hE[(f(t_i,X_i^{1,\pi},Y_i^{1,\pi},Z_i^{1,\pi}) - f(t_i,X_i^{1,\pi},Y_i^{2,\pi},Z_i^{1,\pi}))\transpose\delta Y_i] \notag \\
& - 2hE[(f(t_i,X_i^{1,\pi},Y_i^{2,\pi},Z_i^{1,\pi}) - f(t_i,X_i^{2,\pi},Y_i^{2,\pi},Z_i^{2,\pi}))\transpose\delta Y_i] \notag \\
\ge~& E|\delta Y_i|^2 + \int_{t_i}^{t_{i+1}}E|\delta Z_t|^2 \, \mathrm{d}t - 2k_fh E|\delta Y_i|^2 \notag \\
&- [\lambda_2 E|\delta Y_i|^2 + \lambda_2^{-1}(f_xE|\delta X_i|^2 + f_zE|\delta Z_i|^2)]h.
\label{equa10}
\end{align}
To deal with the integral term in~\eqref{equa10}, we apply Lemma~\ref{lem2} to~\eqref{equa7}\eqref{equa6} and get
\begin{align*}
\delta Z_i &= \frac{1}{h}E[\int_{t_i}^{t_{i+1}}\delta Z_t \, \mathrm{d}t |\mathcal{F}_{t_i}],
\end{align*}
which implies, by the Cauchy inequality,
\begin{align*}
E|\delta Z_i|^2h &= \sum_{k=1}^d E|(\delta Z_i)_k|^2 h = \sum_{k=1}^{d}\frac{1}{h}E\Big|E[\int_{t_i}^{t_{i+1}}(\delta Z_t)_k \,\mathrm{d}t|\mathcal{F}_{t_i}]\Big|^2\notag \\& \le \sum_{k=1}^d\frac{1}{h} E\Big |\int_{t_i}^{t_{i+1}}(\delta Z_t)_k \,\mathrm{d}t \Big |^2 \le  \sum_{k=1}^d\int_{t_i}^{t_{i+1}} E|(\delta Z_t)_k|^2\,\rmd t \\& = \int_{t_i}^{t_{i+1}} E|\delta Z_t|^2\,\rmd t,
\end{align*}
where $(\cdot)_k$ denotes the $k$-th component of the vector.
Plugging it into~\eqref{equa10} gives us
\begin{align}
E|\delta Y_{i+1}|^2 \ge [1 - (2k_f + \lambda_2)h]E|\delta Y_i|^2 +(1-f_z\lambda_2^{-1})E|\delta Z_i|^2h - f_x\lambda_2^{-1} E|\delta X_i|^2h. 
\label{eq:lem1_delta_Y}
\end{align}

Then for any $\lambda_2 \ge f_z$ and sufficiently small $h$ satisfying $(2k_f+\lambda_2)h < 1$, we have
\begin{equation*}
E|\delta Y_i|^2 \le[1 - (2k_f + \lambda_2)h]^{-1}[E|\delta Y_{i+1}|^2 + f_x\lambda_2^{-1} E|\delta X_{i}|^2h].
\end{equation*}
Recall $A_3 = -h^{-1}\ln[1-(2k_f+\lambda_2)h]$, $A_4 = f_x\lambda_2^{-1}[1 - (2k_f + \lambda_2)h]^{-1} $. By induction we obtain that, for $ 0 \le n \le N$, 
\begin{equation*}
E|\delta Y_n|^2 \le e^{A_3(N-n)h}E|\delta Y_N|^2 + A_4\sum_{i = n}^{N-1}e^{A_3(i-n)h}E|\delta X_i|^2h.
\end{equation*}
\end{proof}

Now we are ready to prove Theorem \ref{thm:main1}, whose precise statement is given below. Note that its conditions are satisfied if any of the five cases in the weak coupling and monotonicity conditions holds.
\begin{thmbis}{thm:main1}\label{thm:main1_detail}
Suppose Assumptions \ref{assu1}, \ref{assu2}, \ref{assu3}, and \ref{assu4} hold true and there exist $\lambda_1 > 0, \lambda_2 \ge f_z$ such that $\overline{A_0}<1$, where
\begin{equation}
\begin{aligned}
& \overline{A_1} \coloneqq 2k_b + \lambda_1 + \sigma_x, \\
& \overline{A_2} \coloneqq b_y\lambda_1^{-1} + \sigma_y, \\
& \overline{A_3} \coloneqq 2k_f+\lambda_2, \\
& \overline{A_4} \coloneqq f_x\lambda_2^{-1}, \\
& \overline{A_0} \coloneqq \overline{A_2}\frac{1-e^{-(\overline{A_1}+\overline{A_3})T}}{\overline{A_1}+\overline{A_3}} \Big\{g_x e^{(\overline{A_1}+\overline{A_3})T} + \overline{A_4}\frac{e^{(\overline{A_1}+\overline{A_3})T}-1}{\overline{A_1}+\overline{A_3}} \Big\}. 
\end{aligned}
\end{equation}

Then there exists a constant $C > 0$, depending on $E|\xi|^2$, $ \mathscr{L}$, $T$, $\lambda_1$, and $\lambda_2$, such that for sufficiently small $h$,
\begin{equation}
    \sup_{ t \in [0,T]} (E|X_t - \hat{X}_t^{\pi}|^2 + E|Y_t - \hat{Y}_t^{\pi}|^2) + \int_{0 }^{T} E|Z_t - \hat{Z}_t^\pi|^2 \, \mathrm{d}t \le C[h +  E|g(X_{T}^\pi) - Y_T^{\pi}|^2],
\end{equation}
where $\hat{X}_t^\pi = X_{t_i}^\pi$, $\hat{Y}_t^\pi = Y_{t_i}^\pi$, $\hat{Z}_t^\pi = Z_{t_i}^{\pi}$ for $t \in [t_i,t_{i+1})$.
\end{thmbis}
\begin{rem}
The above theorem also implies the coercivity of the objective function~\eqref{eq:disc_objective} used in the deep BSDE method. Formally speaking, the coercivity means that if $\sum_{i = 0}^{N-1}E|Z_{t_i}^\pi|^2 + E|Y_0^\pi|^2 \rightarrow +\infty$, we have $E|g(X_T^\pi) - Y_T^{\pi}|^2 \rightarrow + \infty$, which is a direct result from Theorem \ref{thm:main1_detail}.
\end{rem}
\medskip

\begin{rem}
\label{rem:lambda_existence}
If any of the weak coupling and monotonicity conditions introduced in Assumption \ref{assu3} holds to a sufficient extent, there must exist $\lambda_1, \lambda_2$ satisfying the conditions in Theorem \ref{thm:main1_detail}. We discuss the 5 cases in what follows.
\begin{enumerate}[\indent 1.]
\item Suppose all other constants and $\lambda_1 >0, \lambda_2\geq f_z$ are fixed, if $T>0$ is sufficiently small, then the second factor of $\overline{A_0}$ could be sufficiently close to 0 such that $\overline{A_0} < 1$.
\item Suppose all other constants and $\lambda_1 >0, \lambda_2\geq f_z$ are fixed, if $b_y\geq 0$ and $\sigma_y \geq 0$ are sufficiently small, then $\overline{A_2} \geq 0$ could be sufficiently small such that $\overline{A_0} < 1$. 
\item Suppose all other constants and $\lambda_1 >0, \lambda_2\geq f_z$ are fixed, if $f_x \geq 0$ and $g_x \geq 0$ are sufficiently small, then $\overline{A_4}$ and thus the last factor in $\overline{A_0}$ could be sufficiently close to 0 such that $\overline{A_0}<1$.
\item Suppose all constants except $k_f$ and $\lambda_2>0$ are fixed.
Let $\overline{A_1}' \coloneqq \overline{A_1} + \overline{A_3} = 
2k_b + 2k_f + \sigma_x + \lambda_1 + \lambda_2$ and rewrite $\overline{A_0}$ as
\begin{align*}
\overline{A_0} = \overline{A_2}\Big\{g_x \frac{e^{\overline{A_1}'T} - 1}{\overline{A_1}'} + \overline{A_4}\frac{e^{\overline{A_1}'T}+e^{-\overline{A_1}'T}-2}{(\overline{A_1}')^2} \Big\}.
\end{align*}
It is straightforward to check that there exists a negative constant $C_1$ such that when $\overline{A_1}' \leq C_1$, $(e^{\overline{A_1}'T}-1)/\overline{A_1}' < 1/(2\overline{A_2}g_x)$.
By the definition of $\overline{A_1}'$, if $k_f$ is sufficiently negative, there exists $\lambda_2 \geq f_x$ such that $\overline{A_1}'=C_1$ and $\lambda_2$ is sufficiently large to ensure
\begin{align*}
\overline{A_2}\,\overline{A_4}\frac{e^{C_1T} + e^{-C_1T}-2}{C_1^2}=\frac{f_x\overline{A_2}(e^{C_1T} + e^{-C_1T}-2)}{\lambda_2C_1^2} < \frac12.
\end{align*}
Combining these two estimates gives $\overline{A_0}<1$.
\item Noting that $k_b$ and $k_f$ play the same role in $\overline{A_1}'$, we use the same argument as above to show that when $k_b$ is sufficiently negative, there exists $\lambda_2 \geq f_x$ such that $\overline{A_0}<1$.
\end{enumerate}
\end{rem}
\medskip

\begin{proof}[Proof of Theorem \ref{thm:main1_detail}]
From the proof of this theorem and throughout the remainder of the paper, we use $C$ to generally denote a constant that only depends on $E|\xi|^2$, $ \mathscr{L}$, and $T$, whose value may change from line to line when there is no need to distinguish. We also use $C(\cdot)$ to generally denote a constant depending on $E|\xi|^2$, $ \mathscr{L}$, $T$ and the constants represented by $\cdot$. 

We use the same notations as Lemma \ref{lem1}. Let $X_{t_i}^{\pi,1} = X_{t_i}^\pi$, $Y_{t_i}^{\pi,1} = Y_{t_i}^{\pi}, Z_{t_i}^{\pi,1} = Z_{t_i}^\pi$ (defined in system~\eqref{eq:FB_system1}) and $X_{t_i}^{\pi,2} = \overline{X}_{t_i}^{\pi}$, $Y_{t_i}^{\pi,2} = \overline{Y}_{t_i}^{\pi}$, $Z_{t_i}^{\pi,2} = \overline{Z}_{t_i}^{\pi}$ (defined in system~\eqref{eq:discrete_FBSDE}). It can be easily checked that both $(\{X_{t_i}^{\pi,j}\}_{0\le i \le N}$, $\{Y_{t_i}^{\pi,j}\}_{0\le i \le N}$, $\{Z_{t_i}^{\pi,j}\}_{0\le i \le N-1})$, $j=1,2$ satisfy the system of equations~\eqref{eq:FB_system_analyzed}. Our proof strategy is to use Lemma~\ref{lem1} to bound %
the difference between two solutions
through the objective function $E|g(X_T^{\pi}) - Y_T^{\pi}|^2$. This allows us to apply Theorem \ref{thm5} to derive the desired estimates.

To begin with, note that for any $\lambda_3 >0$, the RMS-GM inequality yields
\begin{equation*}
E|\delta Y_N|^2 = E|g(\overline{X}_T^\pi) - Y_T^\pi|^2 \le (1 + \lambda_3^{-1})E|g(X_T^{\pi}) - Y_T^{\pi}|^2 + g_x(1 + \lambda_3)E|\delta X_N|^2.
\end{equation*}
Let 
\begin{align*}
P = \displaystyle{\max_{0\le n \le N}e^{-A_1nh}E|\delta X_n|^2}, \quad
S = \displaystyle{\max_{0\le n \le N} e^{A_3nh}E|\delta Y_n|^2}.
\end{align*}
Lemma \ref{lem1} tells us
\begin{equation*}
e^{-A_1nh}E|\delta X_n|^2 \le A_2\sum_{i=0}^{n-1}e^{-A_1(i+1)h}E|\delta Y_i|^2h \le A_2S\sum_{i=0}^{n-1}e^{-A_1(i+1)h-A_3ih}h,
\end{equation*}
and
\begin{align*}
&e^{A_3nh}E|\delta Y_n|^2 \notag \\
\le & e^{A_3T}E|\delta Y_N|^2 +  A_4\sum_{i = n}^{N-1}e^{A_3ih}E|\delta X_i|^2h  \\
\le &e^{A_3T}[(1 + \lambda_3^{-1})E|g(X_T^{\pi}) - Y_T^{\pi}|^2 + g_x(1 + \lambda_3)E|\delta X_N|^2] + A_4\sum_{i = n}^{N-1}e^{A_3ih}E|\delta X_i|^2h  \\
\le &e^{A_3T}(1 + \lambda_3^{-1})E|g(X_T^{\pi}) - Y_T^{\pi}|^2 + [g_x(1+\lambda_3)e^{(A_1+A_3)T} + A_4\sum_{i=n}^{N-1}e^{(A_1 + A_3)ih}h]P.
\end{align*}
Therefore by definition of $P$ and  $S$, we have 
\begin{align*}
P &\le A_2he^{-A_1h}\frac{e^{-(A_1+A_3)T} - 1}{e^{-(A_1+A_3)h}-1}S, \\
S &\le e^{A_3T}(1+\lambda_3^{-1})E|g(X_T^\pi) - Y_T^\pi|^2 + [g_x(1+\lambda_3)e^{(A_1+A_3)T}+ A_4h\frac{e^{(A_1+A_3)T}-1}{e^{(A_1+A_3)h }-1}]P.
\end{align*}
Consider the function
\begin{align*}
A(h) & = A_2he^{-A_1h}\frac{e^{-(A_1+A_3)T} - 1}{e^{-(A_1+A_3)h}-1}[g_x(1+\lambda_3)e^{(A_1+A_3)T}+ A_4h\frac{e^{(A_1+A_3)T}-1}{e^{(A_1+A_3)h }-1}].
\end{align*}
When $A(h) < 1$, we have
\begin{align*}
P &\le [1 - A(h)]^{-1}e^{A_3T}(1+\lambda_3^{-1})A_2h e^{-A_1h}\frac{e^{-(A_1+A_3)T} - 1}{e^{-(A_1+A_3)h}-1}E|g(X_T^{\pi}) - Y_T^{\pi}|^2, \\
S &\le [1 - A(h)]^{-1}e^{A_3T}(1+\lambda_3^{-1})E|g(X_T^{\pi}) - Y_T^{\pi}|^2.
\end{align*}

Let
\begin{equation}
\overline{P} = \max_{0\le n\le N}e^{-\overline{A_1}nh}E|\delta X_n|^2, \quad \overline{S} = \max_{0\le n \le N}e^{\overline{A_3}nh}E|\delta Y_n|^2. 
\end{equation}
Recall
\begin{align*}
\lim_{h\rightarrow 0}A_i = \overline{A_i}, \quad i=1,2,3,4,
\end{align*}
and note that
\begin{align*}
\lim_{h\rightarrow 0}A(h) = \overline{A_2}\frac{1-e^{-(\overline{A_1}+\overline{A_3})T}}{\overline{A_1}+\overline{A_3}}[g_x(1+\lambda_3)e^{(\overline{A_1}+\overline{A_3})T} + \overline{A_4}\frac{e^{(\overline{A_1}+\overline{A_3})T}-1}{\overline{A_1}+\overline{A_3}}].
\end{align*}
When $\overline{A_0} < 1$, comparing $\lim_{h\rightarrow 0}A(h)$ and $\overline{A_0}$, we know that, for any $\epsilon > 0$, there exists $\lambda_3 > 0$ and sufficiently small $h$ such that
\begin{align}
\overline{P} &\le (1+\epsilon)[1 - \overline{A_0}]^{-1} \overline{A_2} e^{\overline{A_3}T}(1+\lambda_3^{-1})\frac{ 1- e^{-(\overline{A_1}+\overline{A_3})T} }{\overline{A_1}+\overline{A_3}}E|g(X_T^{\pi}) - Y_T^{\pi}|^2, \label{eq:overline_P} \\
\overline{S} &\le (1+\epsilon)[1 - \overline{A_0}]^{-1}e^{\overline{A_3}T}(1+\lambda_3^{-1})E|g(X_T^{\pi}) - Y_T^{\pi}|^2.
\end{align}
By fixing $\epsilon = 1$ and choosing suitable $\lambda_3$, we obtain our error estimates of $E|\delta X_n|^2$ and $E|\delta Y_n|^2$ as
\begin{align}
\max_{0\le n \le N}E|\delta X_n|^2 &\le e^{\overline{A_1}T\vee 0} \overline{P}
\le C(\lambda_1,\lambda_2) E|g(X_T^{\pi}) - Y_T^{\pi}|^2, \label{equa11} \\
\max_{0\le n \le N}E|\delta Y_n|^2 &\le e^{(-\overline{A_3}T)\vee 0}\overline{S}
\le C(\lambda_1,\lambda_2) E|g(X_T^{\pi}) - Y_T^{\pi}|^2. \label{equa12}
\end{align}

To estimate $E|\delta Z_n|^2$, we consider estimate~\eqref{eq:lem1_delta_Y}, in which $\lambda_2$ can take any value no smaller than $f_z$. If $f_z \neq 0$, we choose $\lambda_2 = 2f_z$ 
and obtain
\begin{equation*}
\frac{1}{2}E|\delta Z_i|^2 h \le \frac{f_x}{2f_z}E|\delta X_i|^2h + E|\delta Y_{i+1}|^2 - [1-(2k_f+2f_z)h]E|\delta Y_i|^2.
\end{equation*}
Summing from 0 to $N-1$ gives us
\begin{align}
\sum_{i = 0}^{N-1}E|\delta Z_i|^2h 
\le~ & \frac{f_xT}{f_z}\max_{0\le n\le N}E|\delta X_n|^2 + [4(k_f+f_z)T \vee 0 + 2]\max_{0\le n\le N}E|\delta Y_n|^2 \notag \\
\le~ & C(\lambda_1,\lambda_2) E|g(X_T^{\pi}) - Y_T^{\pi}|^2.  \label{equa14}
\end{align}
The case $f_z=0$ can be dealt with similarly by choosing $\lambda_2 = 1$ and the same type of estimate can be derived.
Finally, combining estimates \eqref{equa11}\eqref{equa12}\eqref{equa14} with Theorem~\ref{thm5}, we prove the statement in Theorem~\ref{thm:main1_detail}.
\end{proof}

\section{An Upper Bound for the Minimized Objective Function}
We prove Theorem \ref{thm:main2} in this section. We first state three useful lemmas. Theorem~\ref{thm:main2_detail}, as a detailed statement of Theorem~\ref{thm:main2}, and Theorem~\ref{thm:main2'}, as an variation of Theorem~\ref{thm:main2_detail} under stronger conditions, are then provided, followed by their proofs. The proofs of three lemmas are given at the end of the section.

The main process we analyze is~\eqref{eq:FB_system1}. Lemma~\ref{lem3} gives an estimate of the final distance $E|g(X_T^\pi) - Y_T^{\pi}|^2$ provided by~\eqref{eq:FB_system1} in terms of the deviation between the approximated variables $Y_0^\pi, Z_{t_i}^\pi$ and the true solutions.

\begin{lem} \label{lem3}
Suppose Assumptions \ref{assu1}, \ref{assu2}, and \ref{assu3} hold true. Let $X_T^\pi, Y_0^{\pi}, Y_T^{\pi}$, $\{Z_{t_i}^{\pi}\}_{0\le i \le N-1}$ be defined as in system~\eqref{eq:FB_system1} and $\tilde{Z}_{t_i} =h^{-1}E[\int_{t_i}^{t_{i+1}}Z_t \,\rmd t|\mathcal{F}_{t_i}]$. Given $ \lambda_4 > 0$, 
there exists a constant $C > 0$ depending on $E|\xi|^2$, $\mathscr{L}$, $T$, and $\lambda_4$, such that for sufficiently small $h$,
\begin{align*}
 E|g(X_T^\pi) - Y_T^{\pi}|^2 
\le  (1 + \lambda_4)H_{\mathrm{min}}\sum_{i=0}^{N-1}E|\delta \tilde{Z}_{t_i}|^2h + C[h + E|Y_0 - Y_0^{\pi}|^2],
\end{align*}
where $\delta \tilde{Z}_{t_i} = \tilde{Z}_{t_i} - Z_{t_i}^\pi$, 
$ H(x) =(1 + \sqrt{g_x})^2e^{(2K + 2Kx^{-1} + x)T}(1 + f_zx^{-1})$, and $H_{\mathrm{min}} = \min_{x \in R^{+}} H(x)$.
\end{lem}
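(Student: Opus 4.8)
The plan is to compare the forward Euler scheme $(X^\pi,Y^\pi,Z^\pi)$ from~\eqref{eq:FB_system1} with the true solution $(X,Y,Z)$ evaluated on the grid, propagate the error forward in $i$, and read off the terminal defect at $i=N$. Set $\Delta X_i = X_{t_i}^\pi - X_{t_i}$ and $\Delta Y_i = Y_{t_i}^\pi - Y_{t_i}$, so that $\Delta X_0 = 0$ and $E|\Delta Y_0|^2 = E|Y_0 - Y_0^\pi|^2$. Because $Y_T = g(X_T)$ by Theorem~\ref{thm4}, the terminal mismatch decomposes as $g(X_T^\pi) - Y_T^\pi = [g(X_T^\pi) - g(X_T)] - \Delta Y_N$, so the Lipschitz bound on $g$ and the triangle inequality in $L^2$ give $\|g(X_T^\pi) - Y_T^\pi\|_{L^2} \le \sqrt{g_x}\,\|\Delta X_N\|_{L^2} + \|\Delta Y_N\|_{L^2}$. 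Thus it suffices to majorize both $E|\Delta X_N|^2$ and $E|\Delta Y_N|^2$ by a single quantity $B$; squaring then yields $E|g(X_T^\pi) - Y_T^\pi|^2 \le (1+\sqrt{g_x})^2 B$, which already explains the prefactor $(1+\sqrt{g_x})^2$ appearing in $H$.

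The heart of the argument is the forward recursion for $E|\Delta Y_{i+1}|^2$. Subtracting the true backward dynamics from the scheme gives $\Delta Y_{i+1} = \Delta Y_i - D_i + M_i$, where $D_i = f(t_i,X_{t_i}^\pi,Y_{t_i}^\pi,Z_{t_i}^\pi)h - \int_{t_i}^{t_{i+1}} f(s,X_s,Y_s,Z_s)\,\rmd s$ is the drift defect and $M_i = \int_{t_i}^{t_{i+1}} (Z_{t_i}^\pi - Z_s)\transpose\,\rmd W_s$ is a martingale increment with $E[M_i|\mathcal{F}_{t_i}] = 0$. Expanding $E|\Delta Y_{i+1}|^2$, I would apply It\^o's isometry to $E|M_i|^2$ and split $Z_{t_i}^\pi - Z_s = -\delta\tilde Z_{t_i} + (\tilde Z_{t_i} - Z_s)$, so the first part contributes $E|\delta\tilde Z_{t_i}|^2 h$ while the second is absorbed into $O(h)$ through the $Z$-path regularity $\int_0^T E|Z_t - \tilde Z_t|^2\,\rmd t \le Ch$ of Theorem~\ref{thm4}. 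For the cross term $-2E[\Delta Y_i D_i]$ I would use Young's inequality with a free parameter $x$ together with the Lipschitz estimate $|f_i^\pi - f_i|^2 \le f_x|\Delta X_i|^2 + K|\Delta Y_i|^2 + f_z|\Delta Z_i|^2$ (again replacing the $Z$-difference by $-\delta\tilde Z_{t_i}$ up to path-regularity error): the $f_z$-term combines with the isometry term to give the coefficient $(1 + f_z x^{-1})$ on $E|\delta\tilde Z_{t_i}|^2 h$, while the $y$-terms and the $x$-penalty produce the growth rate $2K + 2Kx^{-1} + x$. This yields a recursion $E|\Delta Y_{i+1}|^2 \le (1 + (2K+2Kx^{-1}+x)h)E|\Delta Y_i|^2 + (1+f_zx^{-1})E|\delta\tilde Z_{t_i}|^2 h + f_xx^{-1}E|\Delta X_i|^2 h + Ch^2 + r_i$, with the remainders $r_i$ (time regularity of $f$ via Assumption~\ref{assu2}, and the non-adapted cross term $E[D_iM_i]$) summing to $O(h)$.

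For the forward component I would run the standard Euler estimate for $E|\Delta X_{i+1}|^2$, using the monotonicity constant $k_b$ and the bounds of Assumption~\ref{assu1}, to obtain $E|\Delta X_n|^2 \le C\sum_{i<n} E|\Delta Y_i|^2 h + Ch$, where the weak-coupling part of Assumption~\ref{assu3} keeps this constant small enough that the $X$-error does not inflate the leading coefficient. Substituting this into the $Y$-recursion and invoking discrete Gronwall gives $E|\Delta Y_N|^2 \le e^{(2K+2Kx^{-1}+x)T}(1+f_zx^{-1})\sum_{i=0}^{N-1} E|\delta\tilde Z_{t_i}|^2 h + C[h + E|Y_0 - Y_0^\pi|^2]$, and the same bound $B$ majorizes $E|\Delta X_N|^2$ under the coupling condition. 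Recognizing $e^{(2K+2Kx^{-1}+x)T}(1+f_zx^{-1})$ as $H(x)/(1+\sqrt{g_x})^2$, minimizing over $x > 0$ replaces it by $H_{\mathrm{min}}$, and collecting the various $(1+\epsilon)$-slacks from the Young steps into the single factor $(1+\lambda_4)$ produces the claimed estimate.

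The step I expect to be the main obstacle is the careful bookkeeping in the $Y$-recursion: isolating exactly the coefficient $(1+f_zx^{-1})$ on the genuine source $\sum E|\delta\tilde Z_{t_i}|^2 h$ while showing that every other contribution --- the martingale/drift cross term $E[D_iM_i]$ (nonzero because $\int_{t_i}^{t_{i+1}} f\,\rmd s$ is not $\mathcal{F}_{t_i}$-measurable, but of higher order in $h$ and controlled via the path regularity of Theorem~\ref{thm4}), the time-discretization errors from Assumption~\ref{assu2}, and the forward $X$-feedback $f_xx^{-1}E|\Delta X_i|^2 h$ --- lands only in $C[h + E|Y_0 - Y_0^\pi|^2]$ and in the slack absorbed by $(1+\lambda_4)$, rather than corrupting the optimized leading constant $H_{\mathrm{min}}$.
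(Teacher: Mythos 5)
Your skeleton matches the paper's proof in outline: the terminal decomposition producing the $(1+\sqrt{g_x})^2$ prefactor, error recursions for the scheme-versus-true-solution differences with the genuine source $(1+f_zx^{-1})E|\delta\tilde Z_{t_i}|^2h$, absorption of the path-regularity errors $\int_{t_i}^{t_{i+1}}E|Z_t-\tilde Z_{t_i}|^2\,\rmd t$ via Theorem~\ref{thm4}, a Gronwall step, optimization over the free parameter $x$, and a $(1+\lambda_4)$ slack collected from Young inequalities. (The paper works with continuous interpolants of the Euler scheme and It\^o's formula rather than discrete increments, which sidesteps your $E[D_iM_i]$ cross-term worry entirely, but that difference is cosmetic.)

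The genuine gap is in how you dispose of the forward feedback $f_xx^{-1}E|\Delta X_i|^2h$. You claim that after substituting $E|\Delta X_n|^2 \le C\sum_{i<n}E|\Delta Y_i|^2h + Ch$ this contribution lands only in $C[h+E|Y_0-Y_0^\pi|^2]$, with ``the weak-coupling part of Assumption~\ref{assu3}'' keeping the constant harmless. This fails twice. First, $E|\Delta X_i|^2$ is driven by the accumulated $\Delta Y$ errors, which themselves carry the accumulated sources $\sum_{j}E|\delta\tilde Z_{t_j}|^2h$; so the feedback multiplies the leading sum and inflates the coefficient in front of $\sum_i E|\delta\tilde Z_{t_i}|^2h$ --- it cannot be shunted into the $O(h)$ remainder. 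Second, Assumption~\ref{assu3} is a disjunction of five cases (small $T$, weak coupling in either direction, strong monotonicity of $f$ or of $b$); under, say, the small-$T$ or strong-monotonicity alternatives the constants $b_y,\sigma_y,f_x$ need not be small, so an argument leaning on weak coupling does not prove the lemma as stated. The paper needs no smallness at all: it runs the $X$- and $Y$-recursions simultaneously, sets $M_i=\max\{E|\delta X_i|^2,E|\delta Y_i|^2\}$, and uses that $K$ bounds every constant in $\mathscr{L}$ to obtain the single scalar recursion $M_{i+1}\le e^{A_{11}h}M_i+A_{10}E|\delta\tilde Z_{t_i}|^2h+C(h^2+E_z^i)$ with $A_{11}\le 2K+2Kx^{-1}+x+3\lambda_6$. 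This max-combination is precisely where the exponent $2K+2Kx^{-1}+x$ in $H$ comes from; your $Y$-only recursion, which already has that combined rate built in \emph{and} a separate $X$-feedback term, double counts on one side while leaving the coupling untreated on the other. The $(1+\lambda_4)$ slack in the paper absorbs only the small $\lambda_6$-terms from the Young steps, not the coupling. Your route could likely be repaired by a genuine two-variable Gronwall argument (the effective rate of the coupled pair is at most $\max\{\alpha,\gamma\}+\sqrt{\beta\delta}\le 2K+2Kx^{-1}+x$ once all constants are bounded by $K$), but as written the bookkeeping step you yourself flagged as the main obstacle is the one that is wrong.
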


Lemma~\ref{lem3} is close to Theorem~\ref{thm:main2}, except that $\tilde{Z}_{t_i}$ is not a function of $X_{t_i}^\pi$ and $Y_{t_i}^\pi$ defined in \eqref{eq:FB_system1}. To bridge this gap, we need the following two lemmas. 
First, similar to the proof of Theorem \ref{thm:main1_detail}, an estimate of the distance between the process defined in~\eqref{eq:FB_system1} and the process defined in~\eqref{eq:discrete_FBSDE} is also needed here. Lemma~\ref{lem4} is a general result to serve this purpose, providing an estimate of the difference between two backward processes driven by different forward processes.

\begin{lem} \label{lem4}
Let $X_{t_i}^{\pi,j} \in L^2(\Omega,\mathcal{F}_{t_i},\mathbb{P})$ for $0 \le i \le N$, $ j = 1, 2$. Suppose $\{Y_{t_i}^{\pi,j}\}_{0\le i \le N}$ and $\{Z_{t_i}^{\pi,j}\}_{0\le i \le N-1}$ satisfy
\begin{align}
\begin{dcases}
Y_{T}^{\pi,j} = g(X_{T}^{\pi,j}), \\
Z_{t_i}^{\pi,j} = \frac{1}{h}E[Y_{t_{i+1}}^{\pi,j}\Delta W_i|\mathcal{F}_{t_i}],  \\
Y_{t_i}^{\pi,j} = E[{Y}_{t_{i+1}}^{\pi,j} + f(t_i,{X}_{t_i}^{\pi,j},{Y}_{t_i}^{\pi,j},{Z}_{t_{i}}^{\pi,j})h|\mathcal{F}_{t_i}],
\label{eq:lem4_def}
\end{dcases}
\end{align}
for $0 \le i \le N-1$, $j = 1, 2$. Let $\delta X_i = X_{t_i}^{\pi,1} - X_{t_i}^{\pi,2}$, $\delta Z_i = Z_{t_i}^{\pi,1} - Z_{t_i}^{\pi,2}$, then for any $\lambda_7 > f_z$, and sufficiently small $h$, we have
\begin{align*}
\sum_{i=0}^{N-1}E|\delta Z_i|^2h \le \frac{\lambda_7(e^{-A_5T}\vee 1)}{\lambda_7 - f_z}\Big\{g_x e^{A_5T - A_5h}E|\delta X_N|^2 + \frac{f_x}{\lambda_7}\sum_{i=0}^{N-1}e^{A_5ih}E|\delta X_i|^2h\Big\},
\end{align*}
where $A_5 \coloneqq -h^{-1}\ln[1 -(2k_f+\lambda_7)h]$.
\end{lem}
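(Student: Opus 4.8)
The plan is to prove Lemma~\ref{lem4} by the same backward-in-time induction strategy used in Lemma~\ref{lem1}, but now adapted to the situation where the two forward processes $X^{\pi,1}$ and $X^{\pi,2}$ are \emph{given} (rather than coupled) and the terminal condition $Y_T^{\pi,j}=g(X_T^{\pi,j})$ is specified. The key structural fact I would exploit is that each $Y_{t_i}^{\pi,j}$ is built backward via a conditional expectation, so for the difference $\delta Y_i=Y_{t_i}^{\pi,1}-Y_{t_i}^{\pi,2}$ we again have a martingale representation on each subinterval, namely $\delta Y_{i+1}=\delta Y_i-\delta f_i h+\int_{t_i}^{t_{i+1}}(\delta Z_t)\transpose\,\rmd W_t$, together with $\delta Z_i=h^{-1}E[\int_{t_i}^{t_{i+1}}\delta Z_t\,\rmd t\mid\mathcal F_{t_i}]$ obtained from Lemma~\ref{lem2}.

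First I would compute $E|\delta Y_{i+1}|^2=E|\delta Y_i-\delta f_i h|^2+\int_{t_i}^{t_{i+1}}E|\delta Z_t|^2\,\rmd t$, exactly as in~\eqref{equa9}, and then lower-bound the cross term by splitting $\delta f_i$ into the $y$-increment (controlled by $k_f$ via Assumption~\ref{assu1}(i)) and the remaining $x,z$-increments (controlled by $f_x,f_z$ via the Lipschitz bound in Assumption~\ref{assu1}(ii)), applying the RMS-GM inequality with parameter $\lambda_7$. Using the Cauchy--Schwarz estimate $E|\delta Z_i|^2h\le\int_{t_i}^{t_{i+1}}E|\delta Z_t|^2\,\rmd t$ to absorb the $f_z$ term, I expect to arrive at a one-step recursion of the shape
\begin{equation*}
E|\delta Y_{i+1}|^2\ge[1-(2k_f+\lambda_7)h]E|\delta Y_i|^2+(1-f_z\lambda_7^{-1})E|\delta Z_i|^2h-f_x\lambda_7^{-1}E|\delta X_i|^2h,
\end{equation*}
which is precisely~\eqref{eq:lem1_delta_Y} with $\lambda_2$ replaced by $\lambda_7$. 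The new ingredient relative to Lemma~\ref{lem1} is that I must retain the positive $Z$-term rather than discard it: since $\lambda_7>f_z$ makes $1-f_z\lambda_7^{-1}>0$, summing the inequality from $i=0$ to $N-1$ telescopes the $\delta Y$ terms and leaves $(1-f_z\lambda_7^{-1})\sum_i E|\delta Z_i|^2h$ bounded above by $E|\delta Y_N|^2-E|\delta Y_0|^2$ plus the $\delta X$ contributions.

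To finish I would feed in the terminal condition $E|\delta Y_N|^2=E|g(X_T^{\pi,1})-g(X_T^{\pi,2})|^2\le g_x E|\delta X_N|^2$ from the Lipschitz bound on $g$, and control the intermediate $E|\delta Y_i|^2$ appearing implicitly. For the latter I would first run the backward Gronwall step (as in the $\delta Y$ bound of Lemma~\ref{lem1}) to get $E|\delta Y_i|^2\le e^{A_5(N-i)h}g_x E|\delta X_N|^2+A_5'\sum_{k\ge i}e^{A_5(k-i)h}E|\delta X_k|^2h$ with the discrete exponential rate $A_5=-h^{-1}\ln[1-(2k_f+\lambda_7)h]$, then substitute back so that only the forward differences $E|\delta X_i|^2$ and the terminal $E|\delta X_N|^2$ survive. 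Collecting the geometric factors $e^{A_5 ih}$ and the prefactor $\lambda_7/(\lambda_7-f_z)=(1-f_z\lambda_7^{-1})^{-1}$, together with the sign-dependent factor $e^{-A_5 T}\vee1$ (arising because $A_5$ may be negative when $f$ is strongly decreasing in $y$, i.e. $k_f$ very negative), I expect to land exactly on the stated bound. The main obstacle is bookkeeping the exponential weights correctly: I must make sure the backward recursion for $\delta Y$ and the telescoped sum for $\delta Z$ are combined with matching discrete rates so that the $\max/\vee$ structure and the explicit constants $g_x e^{A_5T-A_5h}$ and $f_x/\lambda_7$ come out precisely as claimed, rather than with spurious extra factors of $e^{A_5 h}$.
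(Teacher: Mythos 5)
Your proposal is correct and follows essentially the same route as the paper's proof: the paper likewise starts from the one-step inequality \eqref{eq:lem1_delta_Y} with $\lambda_2$ replaced by $\lambda_7$ (valid here, as you note, because its derivation uses only the backward relations and Lemma \ref{lem2}, not the forward dynamics), keeps the $Z$-term via $\lambda_7 > f_z$, and finishes with $E|\delta Y_N|^2 \le g_x E|\delta X_N|^2$. The only organizational difference concerns exactly the step you flagged: instead of summing first and then substituting a backward Gronwall bound for the residual intermediate terms $(2k_f+\lambda_7)h\,E|\delta Y_i|^2$ --- which, if carried out literally, produces the stated bound with an extra factor $e^{A_5 h}$ on both the $g_x$-term and the $f_x$-terms when $A_5>0$ (and a sharper-than-stated bound when $A_5\le 0$, since the residual is then negative and can be dropped) --- the paper multiplies the one-step inequality by $e^{A_5 i h}(e^{-A_5 T}\vee 1)/(1-f_z\lambda_7^{-1})$ \emph{before} summing; since $1-(2k_f+\lambda_7)h = e^{-A_5 h}$, the $\delta Y$ terms then telescope exactly, the fact that $e^{A_5 i h}(e^{-A_5 T}\vee 1)\ge 1$ for $0\le i\le N-1$ guarantees each $E|\delta Z_i|^2 h$ retains coefficient at least one, and the constants $g_x e^{A_5 T-A_5 h}$ and $f_x\lambda_7^{-1}e^{A_5 i h}$ drop out precisely as claimed. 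Since the spurious $e^{A_5 h}$ tends to $1$ as $h\to 0$, your version would still suffice for the downstream applications in Theorems \ref{thm:main2_detail} and \ref{thm:main2'}, but the single-pass exponential weighting is the device that removes the bookkeeping concern you raised.
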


Lemma~\ref{lem5} shows that, similar to the nonlinear Feynman--Kac formula, the discrete stochastic process defined in~\eqref{eq:FB_system1} can also be linked to some deterministic functions.

\begin{lem}\label{lem5}
Let $\{X_{t_i}^\pi\}_{0\le i\le N}, \{Y_{t_i}^\pi\}_{0\le i\le N}$ be defined in \eqref{eq:FB_system1}. When $h < 1/\sqrt{K}$, there exist deterministic functions $U_i^{\pi}: \bR^m \times \bR \rightarrow \bR, V_i^{\pi}: \bR^m \times \bR \rightarrow \bR^d$ for $0 \le i \le N$ such that $Y_{t_i}^{\pi, '}=U_i^\pi(X_{t_i}^\pi, Y_{t_i}^\pi)$, $Z_{t_i}^{\pi, '}=V_i^\pi(X_{t_i}^\pi, Y_{t_i}^\pi)$ satisfy
\begin{align}
\begin{dcases}
Y_{t_N}^{\pi,'} = g(X_{t_N}^{\pi}), \\
Z_{t_i}^{\pi,'} = \frac{1}{h}E[Y_{t_{i+1}}^{\pi,'}\Delta W_i|\mathcal{F}_{t_i}],\\
Y_{t_i}^{\pi,'} = E[{Y}_{t_{i+1}}^{\pi,'} + f(t_i,{X}_{t_i}^{\pi},{Y}_{t_i}^{\pi,'},{Z}_{t_{i}}^{\pi,'})h|\mathcal{F}_{t_i}],
\end{dcases}
\label{eq:lem5_system}
\end{align}
for  $0 \le i \le N-1$.
If $b$ and $\sigma$ are independent of $y$, then there exist deterministic functions $U_i^{\pi}: \bR^m \rightarrow \bR, V_i^{\pi}: \bR^m \rightarrow \bR^d$ for $0 \le i \le N$ such that $Y_{t_i}^{\pi, '}=U_i^\pi(X_{t_i}^\pi)$, $Z_{t_i}^{\pi, '}=V_i^\pi(X_{t_i}^\pi)$ satisfy \eqref{eq:lem5_system}.
\end{lem}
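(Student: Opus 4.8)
The plan is to proceed by backward induction on $i$, exploiting the structural fact (already noted before \eqref{eq:FB_system1}) that the pair $\{(X_{t_i}^\pi, Y_{t_i}^\pi)\}_{0\le i\le N}$ is a Markov chain. The functions $U_i^\pi, V_i^\pi$ are built one time step at a time, starting from the terminal condition, and the contraction hypothesis $h < 1/\sqrt{K}$ enters only when solving the implicit equation that defines $Y_{t_i}^{\pi,'}$.

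First I would make the Markov property explicit. Reading off \eqref{eq:FB_system1}, since $Z_{t_i}^\pi = \phi_i^\pi(X_{t_i}^\pi, Y_{t_i}^\pi)$, the update can be written as $(X_{t_{i+1}}^\pi, Y_{t_{i+1}}^\pi) = \Psi_i(X_{t_i}^\pi, Y_{t_i}^\pi, \Delta W_i)$ for an explicit measurable map $\Psi_i$. Because $\Delta W_i$ is independent of $\mathcal{F}_{t_i}$ while $(X_{t_i}^\pi, Y_{t_i}^\pi)$ is $\mathcal{F}_{t_i}$-measurable, for any measurable (square-integrable) $\psi$ one has $E[\psi(X_{t_{i+1}}^\pi, Y_{t_{i+1}}^\pi, \Delta W_i)\mid \mathcal{F}_{t_i}] = \bar\psi(X_{t_i}^\pi, Y_{t_i}^\pi)$, where $\bar\psi(x,y) = E[\psi(\Psi_i(x,y,\Delta W_i), \Delta W_i)]$ with the expectation taken, for fixed $(x,y)$, over the independent increment $\Delta W_i$. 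This reduces every conditional expectation in \eqref{eq:lem5_system} to a deterministic, measurable function of the current state $(X_{t_i}^\pi, Y_{t_i}^\pi)$, provided the step-$(i+1)$ integrand is already such a function.

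Next I would run the induction. For the base case $i = N$, set $U_N^\pi(x,y) = g(x)$, so that $Y_{t_N}^{\pi,'} = g(X_{t_N}^\pi)$. For the inductive step, assume $Y_{t_{i+1}}^{\pi,'} = U_{i+1}^\pi(X_{t_{i+1}}^\pi, Y_{t_{i+1}}^\pi)$. Applying the averaging identity with $\psi(x',y',w) = h^{-1} U_{i+1}^\pi(x',y')\,w$ gives $Z_{t_i}^{\pi,'} = V_i^\pi(X_{t_i}^\pi, Y_{t_i}^\pi)$ with $V_i^\pi$ deterministic and measurable; taking $\psi(x',y',w) = U_{i+1}^\pi(x',y')$ gives $E[Y_{t_{i+1}}^{\pi,'}\mid \mathcal{F}_{t_i}] = W_i^\pi(X_{t_i}^\pi, Y_{t_i}^\pi)$ for a deterministic $W_i^\pi$. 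It then remains to solve the implicit relation $Y_{t_i}^{\pi,'} = W_i^\pi(X_{t_i}^\pi, Y_{t_i}^\pi) + f\big(t_i, X_{t_i}^\pi, Y_{t_i}^{\pi,'}, V_i^\pi(X_{t_i}^\pi, Y_{t_i}^\pi)\big)h$ for $Y_{t_i}^{\pi,'}$.

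The implicit equation is where the real work lies, and I expect it to be the main obstacle. Viewing the right-hand side, for fixed $(x,v,w)$, as the map $y' \mapsto w + f(t_i,x,y',v)h$, Assumption \ref{assu1}(ii) gives Lipschitz constant $\sqrt{K}$ for $f$ in its $y$-argument, so this map is a contraction once $\sqrt{K}\,h < 1$, i.e. $h < 1/\sqrt{K}$. The Banach fixed point theorem then yields a unique solution $y' = \Phi_i(x,v,w)$, and since $f$ is continuous (being Lipschitz) the solution map $\Phi_i$ depends continuously on its parameters; composing with the measurable map $(x,y)\mapsto (x, V_i^\pi(x,y), W_i^\pi(x,y))$ shows that $U_i^\pi(x,y) \coloneqq \Phi_i(x, V_i^\pi(x,y), W_i^\pi(x,y))$ is measurable, with the square-integrability of $U_i^\pi, V_i^\pi$ propagated by the linear-growth bounds inherited from Assumption \ref{assu1}. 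This closes the induction. Finally, in the decoupled case where $b,\sigma$ do not depend on $y$, the $X$-component of $\Psi_i$ is independent of $Y_{t_i}^\pi$, so $\{X_{t_i}^\pi\}$ is itself a Markov chain; repeating the identical induction with state variable $x$ alone produces $U_i^\pi, V_i^\pi$ as functions of $X_{t_i}^\pi$ only.
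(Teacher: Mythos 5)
Your proposal is correct and takes essentially the same route as the paper's proof: backward induction from the terminal condition, using the independence of $\Delta W_i$ from $\mathcal{F}_{t_i}$ (together with the Markovian update of $(X_{t_i}^\pi, Y_{t_i}^\pi)$) to represent each conditional expectation as a deterministic measurable function of the current state, and then a Banach fixed-point argument with contraction constant $\sqrt{K}h < 1$ to resolve the implicit equation for $Y_{t_i}^{\pi,'}$. The only cosmetic difference is that the paper runs the contraction in the Banach space $L^2(\Omega, \sigma(X_{t_i}^\pi, Y_{t_i}^\pi), \mathbb{P})$, so that measurability of the fixed point as a function of the state is automatic, whereas you contract pointwise in $\bR$ for each fixed parameter triple $(x,v,w)$ and recover measurability of $U_i^\pi$ through the Lipschitz/continuous dependence of the fixed point on its parameters---both implementations are valid and rest on the same estimate for $f$ from Assumption 1.
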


Now we are ready to prove Theorem \ref{thm:main2}, with a precise statement given below. Like Theorem~\ref{thm:main1_detail}, the conditions below are satisfied if any of the five cases of the weak coupling and monotonicity conditions holds to certain extent.
\begin{thmbis}{thm:main2}\label{thm:main2_detail}
Suppose Assumptions \ref{assu1}, \ref{assu2},  \ref{assu3}, and \ref{assu4} hold true. Given any $\lambda_1,\lambda_3 > 0$, $\lambda_2 \ge f_z$, and $\lambda_7 > f_z$,  let $\overline{A_i}, (i=1,2,3,4)$ be defined in \eqref{eq:def_Ai} and
\begin{equation}
\begin{aligned}
\overline{A_5} \coloneqq &~ \lambda_7 + 2k_f, \\
\overline{A_0}' \coloneqq &~ \overline{A_2}\frac{1-e^{-(\overline{A_1}+\overline{A_3})T}}{\overline{A_1}+\overline{A_3}} \Big\{g_x(1+\lambda_3)e^{(\overline{A_1}+\overline{A_3})T} + \overline{A_4}\frac{e^{(\overline{A_1}+\overline{A_3})T}-1}{\overline{A_1}+\overline{A_3}} \Big\}, \\
\overline{B_0} \coloneqq &~ H_{\mathrm{min}}\overline{A_2}e^{\overline{A_3}T} \frac{ 1- e^{-(\overline{A_1}+\overline{A_3})T} }{\overline{A_1}+\overline{A_3}}[1 - \overline{A_0}']^{-1}(1+\lambda_3^{-1})
\\
&~\times \frac{\lambda_7(e^{-\overline{A_5}T}\vee 1)}{\lambda_7 - f_z}\Big\{g_xe^{(\overline{A_1}+\overline{A_5})T} + \frac{f_x}{\lambda_7}\frac{e^{(\overline{A_1}+\overline{A_5})T}-1}{\overline{A_1}+\overline{A_5}} \Big\}.
\end{aligned}
\end{equation}
If there exist $\lambda_1, \lambda_2, \lambda_3, \lambda_7$ satisfying $\overline{A_0}' < 1$ and $\overline{B_0} < 1$, then there exists a constant C depending on $E|\xi|^2$, $\mathscr{L}$, $T$, $\lambda_1$, $\lambda_2$, $\lambda_3$, and $\lambda_7$, such that for sufficiently small $h$,
\begin{equation}
E|g(X_T^\pi) - Y_T^\pi|^2 \le C\Big\{h + E|Y_0 - Y_0^\pi|^2 + \sum_{i=0}^{N-1}E|E[\tilde{Z}_{t_i}|X_{t_i}^\pi,Y_{t_i}^\pi] - Z_{t_i}^\pi|^2h \Big\},
\label{eq:thm2_detail_res}
\end{equation}
where $\tilde{Z}_{t_i} = h^{-1}E[\int_{t_i}^{t_{i+1}}Z_t \,\mathrm{d} t|\mathcal{F}_{t_i}]$. If $Z_t$ is c\'adlag, we can replace $\tilde{Z}_{t_i}$ with $Z_{t_i}$. If $b$ and $\sigma$ are independent of $y$, we can replace $E[\tilde{Z}_{t_i}|X_{t_i}^\pi,Y_{t_i}^\pi]$ with $E[\tilde{Z}_{t_i}|X_{t_i}^\pi]$.
\end{thmbis}

\begin{rem}
If we take the infimum within the domains of $Y_0^\pi$ and $Z_{t_i}^\pi$ on both sides, we recover the original statement in Theorem \ref{thm:main2}.
\end{rem}
\medskip

\begin{rem}
If any of the weak coupling and monotonicity conditions introduced in Assumption \ref{assu3} holds to a sufficient extent, there must exist $\lambda_1, \lambda_2,  \lambda_3,  \lambda_7$ satisfying the conditions in Theorem \ref{thm:main2_detail}. The arguments are very similar to those provided in Remark \ref{rem:lambda_existence}. Hence, we omit the details here for the sake of brevity.
\end{rem}

\begin{proof}[Proof of Theorem \ref{thm:main2_detail}]
Using Lemma \ref{lem3} with $\lambda_4>0$, we obtain
\begin{align}
E|g(X_T^\pi) - Y_T^{\pi}|^2 \le(1 + \lambda_4) H_{\mathrm{min}} 
\sum_{i=0}^{N-1}E|\delta \tilde{Z}_{t_i}|^2h + C(\lambda_4)[h+E|Y_0 - Y_0^{\pi}|^2].
\label{eq:thm2_1}
\end{align}
Splitting the term $\delta \tilde{Z}_{t_i} = \tilde{Z}_{t_i} - Z_{t_i}^\pi$ and applying the generalized mean inequality, we have (recall $\overline{Z}_{t_i}^\pi$ is defined in Theorem~\ref{thm5})
\begin{align}
&~ E|\delta \tilde{Z}_{t_i}|^2 \notag \\
\le &~(1 + \lambda_4)E|\overline{Z}_{t_i}^\pi - E[\overline{Z}_{t_i}^\pi|X_{t_i}^\pi,Y_{t_i}^\pi]|^2  \notag \\
&~ + (1+\lambda_4^{-1})\Big\{E|(\tilde{Z}_{t_i} -\overline{Z}_{t_i}^\pi) - E[(\tilde{Z}_{t_i} -\overline{Z}_{t_i}^\pi)|X_{t_i}^\pi,Y_{t_i}^\pi] \notag \\
&~ \qquad\qquad\qquad
+ (E[\tilde{Z}_{t_i}|X_{t_i}^\pi,Y_{t_i}^\pi] - Z_{t_i}^\pi)|^2\Big\} \notag \\
\le &~(1 + \lambda_4)E|\overline{Z}_{t_i}^\pi - E[\overline{Z}_{t_i}^\pi|X_{t_i}^\pi,Y_{t_i}^\pi]|^2 
 \notag \\
&~ + 3(1+\lambda_4^{-1})\Big\{E|\tilde{Z}_{t_i} - \overline{Z}_{t_i}^\pi|^2 + E|E[(\tilde{Z}_{t_i} - \overline{Z}_{t_i}^\pi)|X_{t_i}^\pi,Y_{t_i}^\pi]|^2  \notag \\
&~ \qquad\qquad\qquad~
+ E|E[\tilde{Z}_{t_i}|X_{t_i}^\pi,Y_{t_i}^\pi] - Z_{t_i}^\pi|^2\Big\} \notag \\
\le &~(1 + \lambda_4)E|\overline{Z}_{t_i}^\pi - E[\overline{Z}_{t_i}^\pi|X_{t_i}^\pi,Y_{t_i}^\pi]|^2 
 \notag \\
&~ + 3(1+\lambda_4^{-1})\Big\{2E|\tilde{Z}_{t_i} - \overline{Z}_{t_i}^\pi|^2
+ E|E[\tilde{Z}_{t_i}|X_{t_i}^\pi,Y_{t_i}^\pi] - Z_{t_i}^\pi|^2\Big\}. \label{eq:thm2_2}
\end{align}
From equations~\eqref{eq:path_regularity}\eqref{eq:discrete_convergece}, we know that
\begin{align}
\sum_{i=0}^{N-1}E|\tilde{Z}_{t_i} - \overline{Z}_{t_i}^\pi|^2h &\leq 2\sum_{i=0}^{N-1}\int_{t_i}^{t_{i+1}}[E|Z_t - \tilde{Z}_{t_i}|^2 + E|Z_t - \overline{Z}_{t_i}^\pi|^2  \rmd t  \notag \\
&= 2\int_{0}^{T}[E|Z_t - \tilde{Z}_{t_i}|^2 + E|Z_t - \overline{Z}_{t_i}^\pi|^2  \rmd t \notag \\
&\leq  C(1+E|\xi|^2)h.
\label{eq:thm2_3}
\end{align}

Plugging estimates \eqref{eq:thm2_2}\eqref{eq:thm2_3} into \eqref{eq:thm2_1} gives us
\begin{align}
&~E|g(X_T^\pi) - Y_T^{\pi}|^2 \notag \\
\le &~ (1 + \lambda_4)^2H_{\mathrm{min}} \sum_{i=0}^{N-1}
E|\overline{Z}_{t_i}^\pi - E[\overline{Z}_{t_i}^\pi|X_{t_i}^\pi,Y_{t_i}^\pi]|^2 h  \notag \\
&~ + C(\lambda_4)\Big\{h + E|Y_0 - Y_0^{\pi}|^2 + \sum_{i=0}^{N-1}
E|E[\tilde{Z}_{t_i}|X_{t_i}^\pi,Y_{t_i}^\pi] - Z_{t_i}^\pi|^2h \Big\}. \label{eq:thm2_4}
\end{align}

It remains to estimate the term $ \sum_{i=0}^{N-1}
E|\overline{Z}_{t_i}^\pi - E[\overline{Z}_{t_i}^\pi|X_{t_i}^\pi,Y_{t_i}^\pi]|^2 h$, to which we intend to apply Lemma~\ref{lem4}.
Let $X_{t_i}^{\pi,1} = X_{t_i}^{\pi}$ and $X_{t_i}^{\pi,2} = \overline{X}_{t_i}^{\pi}$. The associated $Z_{t_i}^{\pi,1}$ and $Z_{t_i}^{\pi,2}$ are then defined according to equation~\eqref{eq:lem4_def}. Note that $Z_{t_i}^{\pi,2} = \overline{Z}_{t_i}^{\pi}$ but $Z_{t_i}^{\pi,1}$ is not necessarily equal to $Z_{t_i}^{\pi}$, due to the possible violation of the terminal condition. 
From Lemma \ref{lem5}, we know $Z_{t_i}^{\pi,1}$ can be represented as $V_i^\pi(X_{t_i}^\pi,Y_{t_i}^\pi)$ with $V_i^\pi$ being a deterministic function. By the property of conditional expectation, we have
\begin{align*}
E|\overline{Z}_{t_i}^\pi - E[\overline{Z}_{t_i}^\pi|X_{t_i}^\pi,Y_{t_i}^{\pi}]|^2 \leq
E|\overline{Z}_{t_i}^\pi - V_i(X_{t_i}^\pi,Y_{t_i}^{\pi})|^2,
\end{align*}
for any $V_i$. Therefore we have the estimate
\begin{align}
&~\sum_{i=0}^{N-1}E|\overline{Z}_{t_i}^\pi -  E[\overline{Z}_{t_i}^\pi|X_{t_i}^\pi,Y_{t_i}^{\pi}]|^2h 
\le \sum_{i=0}^{N-1}E|\delta Z_i|^2h  \notag \\
\le &~ \frac{\lambda_7(e^{-A_5T}\vee 1)}{\lambda_7 - f_z} \Big\{g_xe^{A_5T -A_5h}E|\delta X_N|^2+ \frac{f_x}{\lambda_7}\sum_{i=0}^{N-1}e^{A_5ih}E|\delta X_i|^2h \Big\}. \label{eq:sum_delta_Z}
\end{align}
Recall that $\delta X_i=X_{t_i}^\pi - \overline{X}_{t_i}^\pi, \delta Z_i=Z_{t_i}^{\pi,1} - \overline{Z}_{t_i}^\pi$.
Similar to the derivation of estimate~\eqref{eq:overline_P} (using a given $\lambda_3>0$ without final specification) in the proof of Theorem~\ref{thm:main1_detail}, when $\overline{A_0}' < 1$, we have 
\begin{align}
\overline{P} &\le (1+\lambda_4)\overline{A_2}e^{\overline{A_3}T}\frac{ 1- e^{-(\overline{A_1}+\overline{A_3})T} }{\overline{A_1}+\overline{A_3}}[1 - \overline{A_0}']^{-1}(1+\lambda_3^{-1})E|Y_T^{\pi}-g(X_T^{\pi})|^2,
\label{eq:overline_P_prime}
\end{align}
in which $\overline{P} = \max_{0\le n \le N}e^{-\overline{A_1}nh}E|\delta X_i|^2$.
Plugging~\eqref{eq:overline_P_prime} into~\eqref{eq:sum_delta_Z}, and then into~\eqref{eq:thm2_4}, we get
\begin{align*}
\sum_{i=0}^{N-1}E|\delta Z_i|^2h&\le \frac{\lambda_7(e^{-A_5T}\vee1)\overline{P}}{\lambda_7-f_z}\Big\{g_xe^{(\overline{A_1}+A_5)T - A_5h} + \frac{f_x}{\lambda_7}\sum_{i=0}^{N-1}e^{(\overline{A_1}+A_5)ih}h\Big\},
\end{align*}
and
\begin{align}
&~E|g(X_T^\pi) - Y_T^{\pi}|^2  \notag \\
\le &~ (1 + \lambda_4)^3 B(h)E|g(X_T^\pi) - Y_T^{\pi}|^2 \notag  \\
&~ + C(\lambda_4)\Big\{h + E|Y_0 - Y_0^\pi|^2 + \sum_{i=0}^{N-1}E|E[\tilde{Z}_{t_i}|X_{t_i}^\pi,Y_{t_i}^\pi] - Z_{t_i}^\pi|^2h \Big\}, \label{eq:thm2_5}
\end{align}
for sufficiently small $h$. Here $B(h)$ is defined as
\begin{align*}
B(h) =&~ H_{\mathrm{min}}\overline{A_2}e^{\overline{A_3}T} \frac{ 1- e^{-(\overline{A_1}+\overline{A_3})T} }{\overline{A_1}+\overline{A_3}}[1 - \overline{A_0}']^{-1}(1+\lambda_3^{-1})
\notag \\
&~\times \frac{\lambda_7(e^{-A_5T}\vee 1)}{\lambda_7 - f_z}\Big\{g_xe^{(\overline{A_1}+A_5)T-A_5h} + \frac{f_x}{\lambda_7}\sum_{i=0}^{N-1}e^{(\overline{A_1}+A_5)ih}h \Big\}.
\end{align*}
The forms of inequalities~\eqref{eq:thm2_detail_res} and~\eqref{eq:thm2_5} are already very close.
When $\displaystyle{\lim_{h\rightarrow 0}B(h)}=\overline{B_0} < 1$,
there exists $\lambda_4 > 0$ such that for sufficiently small $h$, we have $1 - (1+\lambda_4)^3B(h) > \frac{1}{2}(1 - \overline{B_0})$. 
Rearranging the term $E|g(X_T^\pi) - Y_T^{\pi}|^2$ in inequality~\eqref{eq:thm2_5} yields our final estimate.
\end{proof}

We shall briefly discuss how the universal approximation theorem can be applied based on Theorem \ref{thm:main2_detail}. For instance, Theorem 2.1 in \cite{arora2016understanding} states that every continuous and piecewise linear function with $m$-dimensional input can be represented by a deep neural network with rectified linear units and at most $\lceil1 + \log_2(m+1)\rceil$ depth. Now we view $Y_0$ as a target function with input $\xi$ and $E[\tilde{Z}_{t_i}|X_{t_i}^\pi,Y_{t_i}^\pi]$ as another target function with input $(X_{t_i}^\pi,Y_{t_i}^\pi)$. Since $E|Y_0|^2 < + \infty$ and $E|E[\tilde{Z}_{t_i}|X_{t_i}^\pi,Y_{t_i}^\pi]|^2 \le E|\tilde{Z}_{t_i}|^2 < + \infty$, we know that both target functions can be approximated  in the $L^2$ sense by continuous and piecewise linear functions with arbitrary accuracy. Then the aforementioned statement implies that the two target functions can be approximated by two neural networks with rectified linear units and at most $\lceil1 + \log_2(m+1)\rceil$ depth, although the width might go to infinity as the approximation error decreases to 0. Therefore, according to Theorem~\ref{thm:main2_detail}, there exist good neural networks such that the value of the objective function is small.

Note that there still exist some concerns about the result in Theorem \ref{thm:main2_detail}. First, the function $E[\tilde{Z}_{t_i}|X_{t_i}^\pi,Y_{t_i}^\pi]$  changes when $Z_{t_j}^\pi$ changes for $j < i$. Second, the function may depend on $Y_{t_i}^\pi$. Even if the FBSDEs are decoupled so that the above two concerns do not exist, we know nothing a priori about the property of $E[\tilde{Z}_{t_i}|X_{t_i}^\pi,Y_{t_i}^\pi]$. In the next theorem, we replace $E[\tilde{Z}_{t_i}|X_{t_i}^\pi,Y_{t_i}^\pi]$ with $\sigma\transpose(t_i,X_{t_i}^\pi,u(t_i,X_{t_i}^\pi))\nabla_x u(t_i,X_{t_i}^\pi)$, which can resolve these problems. However, meanwhile we require more regularity for the coefficients of the FBSDEs.
\begin{thm}\label{thm:main2'}
Suppose Assumptions \ref{assu1}, \ref{assu2}, \ref{assu3}, \ref{assu4} and the assumptions in Theorem $\ref{thm:Feynman-Kac}$ hold true. Let $u$ be the solution of corresponding quasilinear PDEs~\eqref{eq:pde} and $L$ be the squared Lipschitz constant of $\sigma\transpose(t,x,u(t,x))\nabla_x u(t,x)$ with respect to x. With the same notations of Theorem \ref{thm:main2_detail}, when $\overline{A_0}' <1$ and
\begin{align*}
\overline{B_0}' \coloneqq H_{\mathrm{min}}L\overline{A_2}e^{\overline{A_3} T}\frac{(e^{\overline{A_1}T} -1)(1- e^{-(\overline{A_1}+\overline{A_3})T})}{\overline{A_1}(\overline{A_1}+\overline{A_3})}[1 - \overline{A_0}']^{-1}(1+\lambda_3^{-1}) < 1,
\end{align*}
there exists a constant $C>0$ depending on $E|\xi|^2$, $T$, $\mathscr{L}$, $L$, $\lambda_1$, $\lambda_2$, and $\lambda_3$, such that for sufficiently small $h$,
\begin{equation}
E|g(X_T^\pi) - Y_T^\pi|^2 \le C\Big\{ h + E|Y_0 - Y_0^\pi|^2 + \sum_{i=0}^{N-1}E|f_i(X_{t_i}^\pi) - Z_{t_i}^\pi|^2h \Big\},
\label{eq:thm6_res}
\end{equation}
where $f_i(x) = \sigma\transpose(t_i,x,u(t_i,x))\nabla_x u(t_i,x)$.
\end{thm}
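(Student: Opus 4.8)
The plan is to mirror the proof of Theorem~\ref{thm:main2_detail} step for step, exploiting the one feature that is new here: under the hypotheses of Theorem~\ref{thm:Feynman-Kac} the process $Z$ admits the \emph{exact} and \emph{Lipschitz} representation $Z_{t_i}=\sigma\transpose(t_i,X_{t_i},u(t_i,X_{t_i}))\nabla_x u(t_i,X_{t_i})=f_i(X_{t_i})$. This lets me use the deterministic function $f_i$ directly as the surrogate for $\tilde Z_{t_i}$, so the conditional-expectation target $E[\tilde Z_{t_i}\mid X_{t_i}^\pi,Y_{t_i}^\pi]$ and with it the whole apparatus of Lemma~\ref{lem4} and Lemma~\ref{lem5} become unnecessary. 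As before I would start from Lemma~\ref{lem3}, which for any $\lambda_4>0$ reduces the claim to bounding $\sum_{i=0}^{N-1}E|\delta\tilde Z_{t_i}|^2h$, where $\delta\tilde Z_{t_i}=\tilde Z_{t_i}-Z_{t_i}^\pi$, modulo the admissible remainder $C(\lambda_4)[h+E|Y_0-Y_0^\pi|^2]$.

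The crucial step is the three-term split
\[
\delta\tilde Z_{t_i}=\underbrace{\bigl(\tilde Z_{t_i}-f_i(X_{t_i})\bigr)}_{\text{time discretization}}+\underbrace{\bigl(f_i(X_{t_i})-f_i(X_{t_i}^\pi)\bigr)}_{\text{Lipschitz}}+\underbrace{\bigl(f_i(X_{t_i}^\pi)-Z_{t_i}^\pi\bigr)}_{\text{target}},
\]
followed by the generalized mean inequality. For the first piece I would use that $\tilde Z_{t_i}=h^{-1}E[\int_{t_i}^{t_{i+1}}Z_t\,\rmd t\mid\mathcal{F}_{t_i}]$ together with the path regularity \eqref{eq:path_regularity}, writing $\sum_i E|\tilde Z_{t_i}-f_i(X_{t_i})|^2h=\sum_i E|\tilde Z_{t_i}-Z_{t_i}|^2h\le C\sum_i\int_{t_i}^{t_{i+1}}\bigl(E|Z_t-\tilde Z_{t_i}|^2+E|Z_t-Z_{t_i}|^2\bigr)\rmd t\le Ch$; this is precisely where the extra smoothness of Theorem~\ref{thm:Feynman-Kac} enters. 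The second piece is controlled by the squared Lipschitz constant, $E|f_i(X_{t_i})-f_i(X_{t_i}^\pi)|^2\le L\,E|X_{t_i}-X_{t_i}^\pi|^2$. The third piece is exactly the term on the right-hand side of \eqref{eq:thm6_res} and is kept as is.

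It then remains to estimate $\sum_i E|X_{t_i}-X_{t_i}^\pi|^2h$. I would split $E|X_{t_i}-X_{t_i}^\pi|^2\le2E|X_{t_i}-\overline X_{t_i}^\pi|^2+2E|\delta X_i|^2$ with $\delta X_i=X_{t_i}^\pi-\overline X_{t_i}^\pi$; the first summand is $O(h)$ by the discrete convergence \eqref{eq:discrete_convergece} of Theorem~\ref{thm5}, while the second is handled verbatim as in Theorem~\ref{thm:main1_detail}: with $\overline P=\max_n e^{-\overline{A_1}nh}E|\delta X_n|^2$ one has $E|\delta X_i|^2\le e^{\overline{A_1}ih}\overline P$, hence $\sum_i E|\delta X_i|^2h\to\overline P\,\tfrac{e^{\overline{A_1}T}-1}{\overline{A_1}}$ as $h\to0$, and \eqref{eq:overline_P_prime} bounds $\overline P$ linearly in $E|g(X_T^\pi)-Y_T^\pi|^2$ (valid since $\overline{A_0}'<1$). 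Collecting the pieces yields, for some fixed power $p$,
\[
E|g(X_T^\pi)-Y_T^\pi|^2\le(1+\lambda_4)^p B'(h)\,E|g(X_T^\pi)-Y_T^\pi|^2+C(\lambda_4)\Bigl\{h+E|Y_0-Y_0^\pi|^2+\sum_{i=0}^{N-1}E|f_i(X_{t_i}^\pi)-Z_{t_i}^\pi|^2h\Bigr\},
\]
where $\lim_{h\to0}B'(h)=\overline{B_0}'$; the new factor $L\,\tfrac{e^{\overline{A_1}T}-1}{\overline{A_1}}$ sits in $\overline{B_0}'$ exactly in place of the Lemma~\ref{lem4} factor appearing in $\overline{B_0}$. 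When $\overline{B_0}'<1$ I can pick $\lambda_4$ and $h$ small so that $1-(1+\lambda_4)^pB'(h)\ge\tfrac12(1-\overline{B_0}')>0$, and absorbing the feedback term to the left gives \eqref{eq:thm6_res}.

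The main obstacle will be bookkeeping rather than ideas: checking that the constant produced by chaining Lemma~\ref{lem3}, the Lipschitz bound, the geometric sum $\sum_i e^{\overline{A_1}ih}h$, and the $\overline P$-estimate \eqref{eq:overline_P_prime} collapses in the limit exactly to the stated $\overline{B_0}'$, and that every cross term created by the generalized mean inequality falls into the $O(h)+E|Y_0-Y_0^\pi|^2$ remainder. The one genuinely substantive point is that the higher regularity of Theorem~\ref{thm:Feynman-Kac} is doing double duty: it makes $Z_{t_i}=f_i(X_{t_i})$ hold exactly (so the time-discretization error is truly $O(h)$) and it guarantees $L<\infty$, which together let me bypass the projection error $E|\overline Z_{t_i}^\pi-E[\overline Z_{t_i}^\pi\mid X_{t_i}^\pi,Y_{t_i}^\pi]|^2$ that had to be estimated through Lemma~\ref{lem4} in Theorem~\ref{thm:main2_detail}.
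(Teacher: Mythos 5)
Your proposal is correct and takes essentially the same route as the paper's proof: Lemma~\ref{lem3}, then exploiting continuity of $Z_t$ so that $Z_{t_i}=f_i(X_{t_i})$ and the $\tilde{Z}_{t_i}$-versus-$Z_{t_i}$ discrepancy costs only $O(h)$ by path regularity, the Lipschitz bound with $L$, reduction of $\sum_i E|X_{t_i}-X_{t_i}^\pi|^2h$ to $\sum_i E|X_{t_i}^\pi-\overline{X}_{t_i}^\pi|^2h$ via \eqref{eq:discrete_convergece}, the $\overline{P}$-estimate \eqref{eq:overline_P_prime} with the geometric sum $\sum_i e^{\overline{A_1}ih}h\to(e^{\overline{A_1}T}-1)/\overline{A_1}$, and absorption of the feedback term when $\overline{B_0}'<1$ --- exactly the paper's argument, which likewise bypasses Lemmas~\ref{lem4} and~\ref{lem5}. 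One cosmetic slip: in the split $E|X_{t_i}-X_{t_i}^\pi|^2\le 2E|X_{t_i}-\overline{X}_{t_i}^\pi|^2+2E|\delta X_i|^2$ the factors $2$ must be replaced by $(1+\lambda_4)$ and $(1+\lambda_4^{-1})$ weights (as you do in your other splits, and as the paper does), since otherwise the limiting constant would be $2\overline{B_0}'$ rather than the stated $\overline{B_0}'$.
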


\begin{proof}
By Theorem \ref{thm:Feynman-Kac}, we have $Z_{t_i} = f_i(X_{t_i})$, in which $X_t$ is the solution of 
\begin{equation*}
X_t = \xi + \int_{0}^{t}b(s,X_s,u(s,X_s))\, \mathrm{d}s + \int_{0}^{t}\sigma(s,X_s,u(s,X_s))\, \mathrm{d}W_s.
\end{equation*}
Using Lemma \ref{lem3} again with $ \lambda_4>0$ gives us
\begin{align*}
E|g(X_T^\pi) - Y_T^{\pi}|^2 \le(1 + \lambda_4) H_{\mathrm{min}} 
\sum_{i=0}^{N-1}E|\delta \tilde{Z}_{t_i}|^2h + C(\lambda_4)[h+E|Y_0 - Y_0^{\pi}|^2].
\end{align*}
Given the continuity of $\sigma\transpose(t,x,u(t,x))\nabla_x u(t,x)$, we know $Z_t$ admits a continuous version. Hence the term $\tilde{Z}_{t_i}$ in $\delta \tilde{Z}_{t_i} = \tilde{Z}_{t_i} -Z_{t_i}^\pi$ can be replaced with $Z_{t_i}$, i.e.,
\begin{align}
E|g(X_T^\pi) - Y_T^{\pi}|^2 \le(1 + \lambda_4) H_{\mathrm{min}} 
\sum_{i=0}^{N-1}E|Z_{t_i} - Z_{t_i}^\pi|^2h + C(\lambda_4)[h+E|Y_0 - Y_0^{\pi}|^2].
\label{eq:thm6_1}
\end{align}
Similar to the arguments in inequalities~\eqref{eq:thm2_2}\eqref{eq:thm2_3}, we have
\begin{align*}
&~ E|Z_{t_i} - Z_{t_i}^\pi|^2 \notag \\
\leq&~ (1 + \lambda_4^{-1})E|f_i(X_{t_i}^\pi) - Z_{t_i}^\pi|^2 + (1 + \lambda_4)E|Z_{t_i} - f_i(X_{t_i}^\pi)|^2 \notag  \\
\leq&~ (1 + \lambda_4^{-1})E|f_i(X_{t_i}^\pi) - Z_{t_i}^\pi|^2 + (1 + \lambda_4)LE|X_{t_i} - X_{t_i}^\pi|^2 \notag  \\
\leq&~ (1 + \lambda_4^{-1})E|f_i(X_{t_i}^\pi) - Z_{t_i}^\pi|^2 \notag \\
&~ + (1 + \lambda_4)L
[(1 + \lambda_4)E|X_{t_i}^\pi- \overline{X}_{t_i}^\pi|^2 + (1 + \lambda_4^{-1})E|X_{t_i} - \overline{X}_{t_i}^\pi|^2 ] \notag \\
\leq&~ (1 + \lambda_4)^2LE|X_{t_i}^\pi- \overline{X}_{t_i}^\pi|^2 + C(L,\lambda_4)\Big\{E|f_i(X_{t_i}^\pi) - Z_{t_i}^\pi|^2 +  h \Big\}, 
\end{align*}
where the last equality uses the convergence result~\eqref{eq:discrete_convergece}. Plugging it into~\eqref{eq:thm6_1}, we have
\begin{align}
E|g(X_T^\pi) - Y_T^{\pi}|^2 \le
&~ (1 + \lambda_4)^3 H_{\mathrm{min}} L
\sum_{i=0}^{N-1}E|X_{t_i}^\pi - \overline{X}_{t_i}^\pi|^2h \notag \\
&~ + C(L,\lambda_4)\Big\{h + E|Y_0 - Y_0^\pi|^2 + \sum_{i=0}^{N-1}E|f_i(X_{t_i}^\pi) - Z_{t_i}^\pi|^2h \Big\} \label{eq:thm6_2}
\end{align}
for sufficiently small $h$.

We employ the estimate~\eqref{eq:overline_P_prime} again to rewrite
inequality~\eqref{eq:thm6_2} as
\begin{align}
E|g(X_T^\pi) - Y_T^{\pi}|^2 \le
&~ (1 + \lambda_4)^4 \widetilde{B}(h)E|g(X_T^\pi) - Y_T^{\pi}|^2 \notag \\
&~ + C(L,\lambda_4)\Big\{h + E|Y_0 - Y_0^\pi|^2 + \sum_{i=0}^{N-1}E|f_i(X_{t_i}^\pi) - Z_{t_i}^\pi|^2h \Big\}, \label{eq:thm6_3}
\end{align}
where 
\begin{align*}
\widetilde{B}(h) = H_{\mathrm{min}}L\overline{A_2}e^{\overline{A_3} T}\frac{ 1- e^{-(\overline{A_1}+\overline{A_3})T} }{\overline{A_1}+\overline{A_3}}[1 - \overline{A_0}']^{-1}(1+\lambda_3^{-1})\sum_{i=0}^{N-1}e^{i\overline{A_1}h}h.
\end{align*}
Arguing in the same way as that in the proof of Theorem~\ref{thm:main2_detail}, when $\tilde{B}(h)$ is strictly bounded above by $1$ for sufficiently small $h$, we can choose $\lambda_4$ small enough and rearrange the terms in inequality~\eqref{eq:thm6_3} to obtain the result in inequality~\eqref{eq:thm6_res}. 

\end{proof}
\begin{rem}
The Lipschitz constant used in Theorem~\ref{thm:main2'} may be further estimated a priori. Denote the Lipschitz constant of function $f$ with respect to $x$ as $L_{x}(f)$, and the bound of function $f$ as $M(f)$. Loosely speaking, we have
\begin{equation*}
L_x(\sigma\transpose(t,x,u(t,x))\nabla_x u(t,x)) \le 
M(\sigma)L_x(\nabla_x u) + M(\nabla_x u) [L_x(\sigma)+ L_y(\sigma)L_x(u)].
\end{equation*}
Here $L_x(u) = M(\nabla_x u(t,x))$ can be estimated from the first point of Theorem~\ref{thm4} and $L(\nabla_x u(t,x)) = M(\nabla_{xx} u)$ can be estimated through the Schauder estimate (see, e.g., \cite[Chapter 4, Lemma 2.1]{ma2007forward}). Note that the resulting estimate may depend on the dimension $d$.
\end{rem}

\subsection{Proof of Lemmas}
\begin{proof}[Proof of Lemma~\ref{lem3}]
We construct continuous processes $X_t^\pi, Y_t^\pi$ as follows. For $t \in [t_i,t_{i+1})$, let
\begin{align*}
X_t^\pi &= X_{t_i}^\pi + b(t_i,X_{t_i}^\pi,Y_{t_i}^\pi) (t-t_i) + \sigma(t_i,X_{t_i}^\pi,Y_{t_i}^\pi)(W_t - W_{t_i}), \\
Y_t^\pi &= Y_{t_i}^\pi - f(t_i,X_{t_i}^{\pi},Y_{t_i}^\pi,Z_{t_i}^{\pi})(t - t_i) +(Z_{t_i}^{\pi})\transpose(W_t - W_{t_i}).
\end{align*}
From system~\eqref{eq:FB_system1}, we see this definition also works at $t_{i+1}$. We are interested in again the estimates of the following terms
\begin{align*}
\delta X_t &=  X_t - X_t^\pi, \quad \delta Y_t =  Y_t - Y_t^\pi, \quad \delta Z_t =  Z_t - Z_{t_i}^\pi,~~~t\in[t_i,t_{i+1}).
\end{align*}
For $t\in[t_i,t_{i+1})$, let
\begin{align*}
 \delta b_t &= b(t,X_t,Y_t) - b(t_i,X_{t_i}^\pi,Y_{t_i}^\pi), \\
\delta \sigma_t &= \sigma(t,X_t,Y_t) - \sigma(t_i,X_{t_i}^\pi,Y_{t_i}^\pi), \\
\delta f_t &= f(t,X_t,Y_t,Z_t) - f(t_i,X_{t_i}^\pi,Y_{t_i}^\pi,Z_{t_i}^\pi).
\end{align*}
By definition,
\begin{align*}
\mathrm{d}(\delta X_t) &= \delta b_t\,\mathrm{d}t + \delta \sigma_t\,\mathrm{d}W_t, \\
\mathrm{d}(\delta Y_t) &= -\delta f_t \,\mathrm{d}t + (\delta Z_t)\transpose\,\mathrm{d}W_t.
\end{align*}
Then by It\^{o}'s formula, we have
\begin{align*}
\mathrm{d}|\delta X_t|^2 &= [2(\delta b_t)^T\delta X_t + \|\delta \sigma_t\|^2]\mathrm{d}t + 2(\delta X_t)\transpose \delta \sigma_t \,\mathrm{d}W_t, \\
\mathrm{d}|\delta Y_t|^2 &= [-2(\delta f_t)\transpose \delta Y_t + |\delta Z_t|^2]  \,\mathrm{d}t + 2\delta Y_t (\delta Z_t)\transpose \,\mathrm{d}W_t.
\end{align*}
Thus,
\begin{align*}
E|\delta X_t|^2 &= E|\delta X_{t_i}|^2 + \int_{t_i}^{t}E[2 (\delta b_s)\transpose \delta X_s + \|\delta \sigma_s\|^2] \,\rmd s, \\
E|\delta Y_t|^2 &= E|\delta Y_{t_i}|^2 + \int_{t_i}^{t} E[ -2 (\delta f_s)\transpose \delta Y_s  + |\delta Z_s|^2  ]\, \rmd s. 
\end{align*}

For any $\lambda_5, \lambda_6 > 0$, using Assumptions~\ref{assu1}, \ref{assu2} and the RMS-GM inequality, we have
\begin{align}
&~ E|\delta X_t|^2 \notag \\
\le &~E|\delta X_{t_i}|^2 +  \int_{t_i}^{t}[\lambda_5E|\delta X_s|^2 + \lambda_5^{-1}E|\delta b_s|^2 + E\|\delta \sigma_s\|^2] \,\mathrm{d}s \notag \\
\le &~ E|\delta X_{t_i}|^2 + \lambda_5 \int_{t_i}^{t}E|\delta X_s|^2 \,\mathrm{d}s + \int_{t_i}^{t}K(\lambda_5^{-1} + 1)|s - t_i|\,\mathrm{d}s \notag \\
&~ + \int_{t_i}^{t}[(K\lambda_5^{-1} + \sigma_x)E|X_s - X_{t_i}^\pi|^2 + (b_y\lambda_5^{-1} + \sigma_y)E|Y_s - Y_{t_i}^\pi|^2]\,\mathrm{d}s. \label{eq:lem3_1}
\end{align}
By the RMS-GM inequality, we also have
\begin{align}
E|X_s - X_{t_i}^\pi|^2 &\le (1 + \epsilon_1)E|\delta X_{t_i}|^2 + (1 + \epsilon_1^{-1})E|X_s - X_{t_i}|^2, \label{eq:lem3_2} \\
E|Y_s - Y_{t_i}^\pi|^2 &\le (1 + \epsilon_2)E|\delta  Y_{t_i}|^2 + (1 + \epsilon_2^{-1})E|Y_s - Y_{t_i}|^2, \label{eq:lem3_3}
\end{align}
in which we choose $\epsilon_1 = \lambda_6(K\lambda_5^{-1}+\sigma_x)^{-1} $ and $\epsilon_2 = \lambda_6(b_y\lambda_5^{-1} + \sigma_y)^{-1}$. The path regularity in Theorem~\ref{thm4} tells us
\begin{align}
\sup_{s\in[t_i,t_{i+1}]} (E|X_s - X_{t_i}|^2 + E|Y_s - Y_{t_i}|^2) \le Ch.
\label{eq:lem3_4}
\end{align}
Plugging inequalities~\eqref{eq:lem3_2}\eqref{eq:lem3_3}\eqref{eq:lem3_4} into~\eqref{eq:lem3_1} with simplification, we obtain
\begin{align}
E|\delta X_t|^2 &\le [1 + (K\lambda_5^{-1} + \sigma_x + \lambda_6)h]E|\delta X_{t_i}|^2 + \lambda_5 \int_{t_i}^{t}E|\delta X_s|^2 \,\mathrm{d}s \notag \\
&\hphantom{=~} +(b_y\lambda_5^{-1} + \sigma_y + \lambda_6) E|\delta Y_{t_i}|^2 h + C(\lambda_5,\lambda_6)h^2. \label{eq:lem3_delta_x}
\end{align}
Then, 
by Gr\"{o}nwall inequality, we have
\begin{align}
& E|\delta X_{t_{i+1}}|^2 \notag \\
\le~& e^{\lambda_5 h}\{[1 + (K\lambda_5^{-1} + \sigma_x +\lambda_6)h]E|\delta X_{t_i}|^2  \notag\\
& \quad~~~~+ (b_y\lambda_5^{-1} + \sigma_y + \lambda_6)E|\delta Y_{t_i}|^2h + C(\lambda_5,\lambda_6)h^2\} \notag \\
\le~& e^{A_6h}E|\delta X_{t_i}|^2 + e^{\lambda_5h}(b_y\lambda_5^{-1}+\sigma_y + \lambda_6)E|\delta Y_{t_i}|^2h + C(\lambda_5,\lambda_6)h^2 \notag \\
\le~& e^{A_6h}E|\delta X_{t_i}|^2 + A_7E|\delta Y_{t_i}|^2h + C(\lambda_5,\lambda_6)h^2, \label{eq:lem4_grownwall_x}
\end{align}
where $A_6 \coloneqq K\lambda_5^{-1} + \sigma_x + \lambda_5 + \lambda_6$, $A_7 \coloneqq b_y\lambda_5^{-1} + \sigma_y + 2\lambda_6$, and $h$ is sufficiently small.

Similarly, with the same type of estimates in~\eqref{eq:lem3_1}\eqref{eq:lem3_delta_x}, for any $\lambda_5, \lambda_6 > 0$, we have
\begin{align}
&~E|\delta Y_t|^2  \notag \\
\le &~ E|\delta Y_{t_i}|^2 + \int_{t_i}^{t}[\lambda_5E|\delta Y_s|^2 + \lambda_5^{-1}E|\delta f_s|^2 + E|\delta Z_s|^2] \,\mathrm{d}s \notag \\
\le &~E|\delta Y_{t_i}|^2 + \lambda_5 \int_{t_i}^{t}E|\delta Y_s|^2\,\mathrm{d}s + \int_{t_i}^{t} K\lambda_5^{-1}|s - t_i|\,\mathrm{d}s \notag \\
&~ + \int_{t_i}^{t}\lambda_5^{-1}[f_xE|X_s - X_{t_i}^{\pi}|^2 + K E|Y_s - Y_{t_i}^\pi|^2]\,\mathrm{d}s + (1 + f_z\lambda_5^{-1})\int_{t_i}^{t}E|\delta Z_s|^2 \,\mathrm{d}s \notag \\
\le &~[1 + (K\lambda_5^{-1} + \lambda_6)h]E|\delta Y_{t_i}|^2 + \lambda_5\int_{t_i}^{t}E|\delta Y_s|^2\,\mathrm{d}s + (f_x\lambda_5^{-1} + \lambda_6)E|\delta X_{t_i}^\pi|^2h \notag \\
&~+(1 + f_z\lambda_5^{-1})\int_{t_i}^{t}E|\delta Z_s|^2\,\mathrm{d}s + C(\lambda_5,\lambda_6)h^2. \notag
\end{align}
Arguing in the same way of~\eqref{eq:lem4_grownwall_x}, by Gr\"{o}nwall inequality, for sufficiently small $h$, we have
\begin{align*}
&E|\delta Y_{t_{i+1}}|^2 \\
\le~&e^{A_8h}E|\delta Y_{t_i}|^2 + A_9E|\delta X_{t_i}|^2 h + (1 + f_z\lambda_5^{-1} + \lambda_6)\int_{t_i}^{t}E|\delta Z_s|^2\,\mathrm{d}s + C(\lambda_5,\lambda_6)h^2,
\end{align*}
with $A_8 \coloneqq K\lambda_5^{-1} + \lambda_5 + \lambda_6$, $A_9 \coloneqq f_x\lambda_5^{-1} + 2\lambda_6$.
Choosing $\epsilon_3 = (1 + f_z\lambda_5^{-1} + \lambda_6)^{-1}\lambda_6$ and using
\begin{equation*}
\int_{t_i}^{t_{i+1}} E|\delta Z_t|^2 \,\mathrm{d}t \le (1 + \epsilon_3)E|\delta \tilde{Z}_{t_i}|^2 h + (1 + \epsilon_3^{-1})E_z^i,
\end{equation*}
where $\delta \tilde{Z}_{t_i} = \tilde{Z}_{t_i} - Z_{t_i}^\pi$ and $E_z^i = \int_{t_i}^{t_{i+1}}E|Z_t - \tilde{Z}_{t_i}|^2\,\mathrm{d}t$, we furthermore obtain
\begin{equation}
E|\delta Y_{t_{i+1}}|^2 \le e^{A_8h}E|\delta Y_{t_i}|^2 + A_9E|\delta X_{t_i}|^2 h + A_{10} E|\delta \tilde{Z}_{t_i}|^2h + C(\lambda_5,\lambda_6)(h^2+E_z^i),
\label{eq:lem4_grownwall_y}
\end{equation}
with $A_{10} \coloneqq 1 + f_z\lambda_5^{-1} + 2\lambda_6$.

Define
\begin{align*}
M_i = \max\{E|\delta X_i|^2, E|\delta Y_i|^2\}, \quad 0 \le i \le N.
\end{align*}
Combining inequalities~\eqref{eq:lem4_grownwall_x}\eqref{eq:lem4_grownwall_y} together yields
\begin{align*}
&M_{i+1} \\
\le~& (e^{\max\{A_6, A_8\}h}+ \max\{A_7, A_9\}h)M_i + A_{10}E|\delta \tilde{Z}_{t_i}|^2h + C(\lambda_5,\lambda_6)(h^2+E_z^i) \\
\le~& e^{(\max\{A_6, A_8\} + \max\{A_7, A_9\})h}M_i + A_{10}E|\delta \tilde{Z}_{t_i}|^2h + C(\lambda_5,\lambda_6)(h^2+E_z^i).
\end{align*}
Letting $A_{11} \coloneqq \max\{A_6,A_8\} + \max\{A_7,A_9\}$,
we have
\begin{equation}
M_{i+1} \le e^{A_{11}h}M_i + A_{10}E|\delta \tilde{Z}_{t_i}|^2h + C(\lambda_5,\lambda_6)(h^2 + E_z^i).
\label{eq:lem4_Mi}
\end{equation}
We start from $M_0 = E|Y_0 - Y_0^\pi|^2$ and apply inequality~\eqref{eq:lem4_Mi} repeatedly to obtain
\begin{equation}
M_N \le A_{10}e^{A_{11}T}\sum_{i=0}^{N-1}E|\delta \tilde{Z}_{t_i}|^2h + C(\lambda_5,\lambda_6)[h + E|Y_0 - Y_0^\pi|^2],
\label{eq:lem4_MN1}
\end{equation}
in which for the last term we use the fact $\sum_{i=0}^{N-1}E_z^i \le Ch$ from inequality~\eqref{eq:path_regularity}.
Note that
\begin{align*}
A_{10} &= 1 + f_z\lambda_5^{-1} + 2\lambda_6, \\
A_{11} &\le 2K + 2K\lambda_5^{-1} + \lambda_5 + 3\lambda_6.
\end{align*}
Given any $\lambda_4 > 0$, we can choose $\lambda_6$ small enough such that
\begin{align*}
(1 + f_z\lambda_5^{-1} + 2\lambda_6)e^{A_{11}T} &\le (1+\lambda_4)(1+f_z\lambda_5^{-1})e^{(2K + 2K\lambda_5^{-1} + \lambda_5)T}.
\end{align*}
This condition and inequality~\eqref{eq:lem4_MN1} together give us
\begin{align}
M_N \le~& (1+\lambda_4)(1+f_z\lambda_5^{-1})e^{(2K + 2K\lambda_5^{-1} + \lambda_5)T}\sum_{i=0}^{N-1}E|\delta \tilde{Z}_{t_i}|^2h \notag \\
&+ C(\lambda_4,\lambda_5)[h+E|Y_0 - Y_0^\pi|^2]. \label{eq:lem4_MN2}
\end{align}
Finally, by decomposing the objective function, we have
\begin{align}
&~E|g(X_T^\pi) - Y_T^{\pi}|^2 \notag \\
= &~ E|g(X_T^\pi) - g(X_T) + Y_T - Y_T^{\pi}|^2 \notag \\
\le &~ (1+(\sqrt{g_x})^{-1}) E|g(X_T^\pi) - g(X_T)|^2 + (1+\sqrt{g_x}) E|\delta Y_N|^2 \notag \\
\le &~ (g_x + \sqrt{g_x})E|\delta X_N|^2 + (1 + \sqrt{g_x})E|\delta Y_N|^2 \notag \\
\le &~ (1 + \sqrt{g_x})^2M_N. \label{eq:lem4_terminal}
\end{align}
We complete our proof by combining inequalities~\eqref{eq:lem4_MN2}\eqref{eq:lem4_terminal} and choosing $\lambda_5 = \mathrm{argmin}_{x \in \bR^{+}} H(x)$.
\end{proof}

\begin{proof}[Proof of Lemma~\ref{lem4}]
We use the same notations as in the proof of Lemma~\ref{lem1}. As derived in~\eqref{eq:lem1_delta_Y}, for any $\lambda_7 > f_z \ge 0$, we have
\begin{equation}
E|\delta Y_{i+1}|^2 \ge [1 - (2k_f+\lambda_7)h]E|\delta Y_i|^2 + (1 - f_z\lambda_7^{-1})E|\delta Z_i|^2h - f_x\lambda_7^{-1}E|\delta X_i|^2h.
\label{eq:lem4_0}
\end{equation}
Multiplying both sides of~\eqref{eq:lem4_0} by $e^{A_5ih}(e^{-A_5T}\vee 1)/(1 - f_z\lambda_7^{-1})$ gives us
\begin{align}
&~\frac{\lambda_7(e^{-A_5T}\vee 1)}{\lambda_7 - f_z}\Big\{e^{A_5ih} E|\delta Y_{i+1}|^2 - e^{A_5(i-1)h}E|\delta Y_i|^2 + e^{A_5ih} \frac{f_x}{\lambda_7}E|\delta X_i|^2h\Big\} \notag \\
\ge &~e^{A_5ih}(e^{-A_5T}\vee 1)E|\delta Z_i|^2h \notag \\
\ge &E|\delta Z_i|^2h. \label{eq:lem4_1}
\end{align}
Summing~\eqref{eq:lem4_1} up from $i=0$ to $N-1$, we obtain
\begin{align}
\sum_{i=0}^{N-1}E|\delta Z_i|^2h \le \frac{\lambda_7(e^{-A_5T}\vee 1)}{\lambda_7 - f_z} \Big\{e^{A_5T - A_5h}E|\delta Y_N|^2 + \frac{f_x}{\lambda_7}\sum_{i=0}^{N-1}e^{A_5ih}E|\delta X_i|^2h  \Big\}.
\label{eq:lem4_2}
\end{align}
Note that $E|\delta Y_N|^2 \le g_xE|\delta X_N|^2$ by Assumption~\ref{assu1}.
Plugging it into~\eqref{eq:lem4_2}, we arrive at the desired result.
\end{proof}

\begin{proof}[Proof of Lemma~\ref{lem5}]
We prove by induction backwardly.
Let $Z_{t_N}^{\pi,'} = 0$ for convenience. It is straightforward to see that the statement holds for $i = N$.
Assume the statement holds for $i = k+1$. For $i = k$, we know $Y_{t_{k+1}}^{\pi,'} = U_{k+1}(X_{t_{k+1}}^\pi, Y_{t_{k+1}}^\pi)$.
Recalling the definition of $\{X_{t_i}^\pi\}_{0\le i\le N}, \{Y_{t_i}^\pi\}_{0\le i\le N}$ in~\eqref{eq:FB_system1}, we can rewrite $Y_{t_{k+1}}^{\pi,'} = \overline{U}_{{k}}(X_{t_k}^\pi, Y_{t_k}^\pi,\Delta W_k)$, with $\overline{U}_k: \bR^m \times \bR \times \bR^d \rightarrow \bR$ being a deterministic function. Note $Z_{t_k}^{\pi,'} = h^{-1}E[\overline{U}_k(X_{t_k}^\pi, Y_{t_k}^\pi,\Delta W_k)\Delta W_k|\mathcal{F}_{t_k}]$. Since $\Delta W_k$ is independent of $\mathcal{F}_{t_k}$, there exists a deterministic function $V_k^{\pi}: \bR^m \times \bR \rightarrow \bR^d$ such that $Z_{t_k}^{\pi, '}=V_k^\pi(X_{t_k}^\pi, Y_{t_k}^\pi)$.

Next we consider $Y_{t_k}^{\pi,'}$. Let $H_k = L^2(\Omega, \sigma(X_{t_k}^\pi, Y_{t_k}^\pi), \mathbb{P})$, where $\sigma(X_{t_k}^\pi, Y_{t_k}^\pi)$ denotes the $\sigma$-algebra generated by $X_{t_k}^\pi, Y_{t_k}^\pi$. We know $H_k$ is a Banach space and another equivalent representation is 
\begin{equation*}
H_k=\{Y = \phi(X_{t_k}^{\pi}, Y_{t_k}^{\pi})~|~\phi \text{ is measurable  and } E|Y|^2 < \infty \}.
\end{equation*}
Consider the following map defined on $H_k$:
\begin{equation*}
\Phi_k(Y) = E[{Y}_{t_{k+1}}^{\pi,'} + f(t_k,{X}_{t_k}^{\pi},Y,{Z}_{t_{k}}^{\pi,'})h|\mathcal{F}_{t_k}].
\end{equation*}
By Assumption~\ref{assu3}, $\Phi_k(Y)$ is square-integrable. Furthermore, following the same argument for $Z_{t_k}^{\pi,'}$, $\Phi_k(Y)$ can also be represented as a deterministic function of $X_{t_k}^\pi, Y_{t_k}^\pi$. Hence, $\Phi_k(Y) \in H_k$. Note that Assumption~\ref{assu1} implies $E|\Phi_k(Y_1) - \Phi_k(Y_2)|^2 \le Kh^2 E|Y_1 - Y_2|^2$. Therefore $\Phi_k$ is a contraction map on $H_k$ when $h < 1/\sqrt{K}$. By the Banach fixed-point theorem, there exists a unique fixed-point $Y^*=\phi_k^*(X_{t_k}^\pi, Y_{t_k}^\pi) \in H_k$ satisfying $Y^* = \Phi_k(Y^*)$. We choose $U_k^\pi=\phi_k^*$ to validate the statement for $Y_{t_k}^{\pi,'}$.

When $b$ and $\sigma$ are independent of $y$, all of the arguments above can be made similarly with $U_i^\pi, V_i^\pi$ also being independent of $Y$.
\end{proof}

\section{Numerical Examples}
\subsection{General Setting}
In this section, we illustrate the proposed numerical scheme by solving two high-dimensional coupled FBSDEs adapted from the literature. The common setting for these two numerical examples is as follows. We assume 
$d=m=100$, that is, $X_t, Z_t, W_t \in \bR^{100}$. Assume $\xi$ is deterministic and we are interested in the approximation error of $Y_0$, which is also a deterministic scalar. 

We use $N-1$ fully-connected feedforward neural networks to parameterize $\phi_i^\pi, i=0,1,\dots,N-1$. Each of the neural networks has 2 hidden layers with dimension $d+10$. The input has dimension $d+1~(X_i\in \bR^d, Y_i \in \bR)$ and the output has dimension $d$. In practice, one can of course choose $X_i$ only as the input. We additionally test this input for the two examples and find no difference in terms of the relative error of $Y_0$ (up to second decimal place). We use the rectifier function (ReLU) as the activation function and adopt batch normalization~\cite{ioffe2015BN} right after each matrix multiplication and before activation.
We employ the Adam optimizer~\cite{kingma2015adam} to optimize the parameters with batch-size being 64. The loss function is computed based on 256 validation sample paths. We initialize all the parameters using a uniform or normal distribution and run each experiment 5 times to report the average result.

\subsection{Example 1}
The first problem is adapted from~\cite{milstein2006numerical,huijskens2016efficient}, in which the original spatial dimension of the problem is 1. We consider the following coupled FBSDEs
\begin{align}
\begin{dcases}
    X_{j,t} = x_{j,0} + \int_{0}^{t}\frac{X_{j,s}(1+X_{j,s}^2)}{(2+X_{j,s})^3}\, \mathrm{d}s \\
    \qquad\quad + \int_{0}^{t} \frac{1+X_{j,s}^2}{2+X_{j,s}^2}\sqrt{\frac{1+2Y_s^2}{1+Y_s^2+\exp{\Big(-\frac{2|X_s|^2}{d(s+5)}\Big)}}}\, \mathrm{d}W_{j,s}, \quad j=1,\dots,d,\\
    Y_t = \exp{\Big(-\frac{|X_T|^2}{d(T+5)}\Big)} \\
    \qquad~ + \int_{t}^{T}a(s,X_s,Y_s)+\sum_{j=1}^db(s,X_{j,s},Y_s)Z_{j,s}\, \mathrm{d}s - \int_{t}^{T}(Z_s)\transpose\, \mathrm{d}W_s,
\end{dcases}
\label{eq:example1}
\end{align}
where $X_{j,t}, Z_{j,t}, W_{j,t}$ denote the $j$-th components of $X_t, Y_t, W_t$, and the coefficient functions are given as
\begin{align*}
a(t,x,u) = &~ \frac{1}{d(t+5)}\exp{\Big(-\frac{|x|^2}{d(t+5)}\Big)} \\
&~ \times
\sum_{j=1}^d \Bigg\{\frac{4x_j^2(1+x_j^2)}{(2+x_j^2)^3} 
+  \frac{(1+x_j^2)^2}{(2+x_j^2)^2} 
- \frac{2x_j^2(1+x_j^2)^2}{d(t+5)(2+x_j^2)^2}
- \frac{x_j^2}{t+5}\Bigg\}, \\
b(t,x_j,u) = &~ \frac{x_j}{(2+x_j^2)^2}\sqrt{\frac{1+u^2+\exp{\Big(-\frac{|x|^2}{d(t+5)}\Big)}}{1+2u^2}}.
\end{align*}
It can be verified by It\^{o}'s formula that the $Y$ part of the solution of~\eqref{eq:example1} is given by
\begin{align*}
Y_t = \exp{\Big(-\frac{|X_t|^2}{d(t+5)}\Big)}.
\end{align*}

Let $\xi=(1,1,\dots,1)$ ($100$-dimensional), $T=5, N=160$. The initial guess of $Y_0$ is generated from a uniform distribution on the interval $[2, 4]$ while the true value of $Y_0\approx 0.81873$. We train 25000 steps with an exponential decay learning rate that decays every 100 steps, with the starting learning rate being 1e-2 and ending learning rate being 1e-5. Figure~\ref{fig:exp} illustrates the mean of the loss function and relative approximation error of $Y_0$ against the number of iteration steps. All runs converged and the average final relative error of $Y_0$ is $0.39\%$.
\begin{figure}[ht]
\centering
\includegraphics[width =6.0cm]{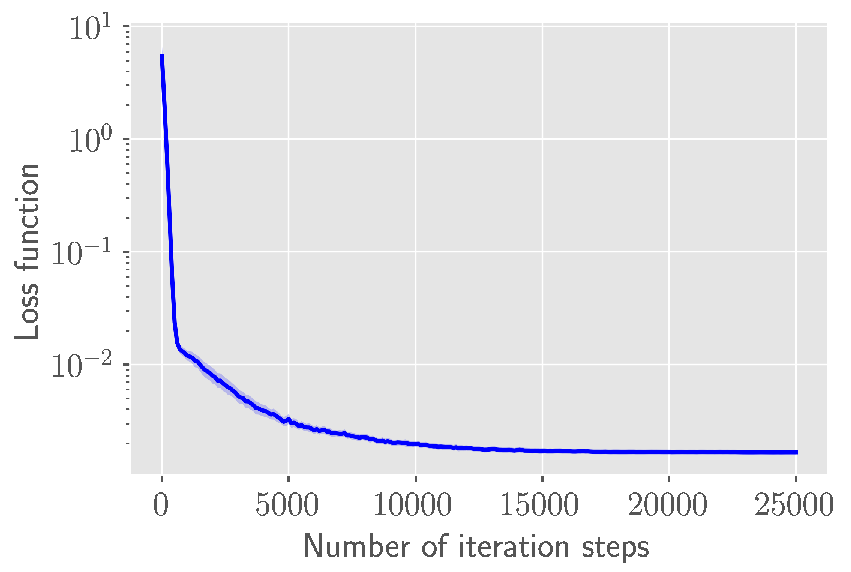}
\includegraphics[width =6.0cm]{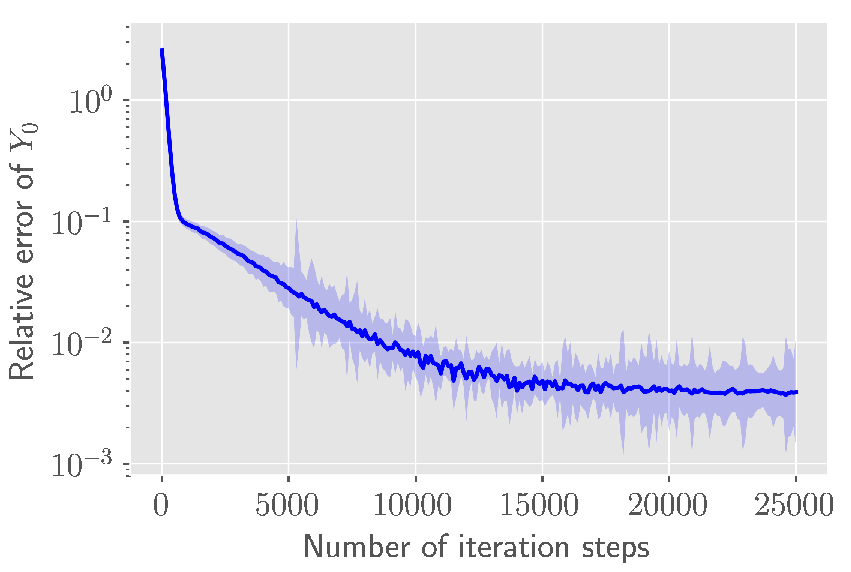}
\caption{Loss function (left) and relative approximation error of $Y_0$ (right)
against the number of iteration steps in the case of Example 1 ($100$-dimensional).
The proposed deep BSDE method achieves a relative error of size $0.39\%$. The shaded area depicts the mean $\pm$ the standard deviation of the associated quantity in 5 runs.
\label{fig:exp}}
\end{figure}

\subsection{Example 2}
The second problem is adapted from~\cite{bender2008time}, in which the spatial dimension is originally tested up to 10. The coupled FBSDEs are given by
\begin{align}
\begin{dcases}
    X_{j,t} = x_{j,0}  + \int_{0}^{t} \sigma Y_s\, \mathrm{d}W_{j,s}, \quad j=1,\dots,d,\\
    Y_t = D\sum_{j=1}^d \sin(X_{j,T}) \\
    \qquad~ + \int_{t}^{T} -rY_s + \frac12 e^{-3r(T-s)\sigma^2}
    \Big(D\sum_{j=1}^d \sin(X_{j,s})\Big)^3\, \mathrm{d}s - \int_{t}^{T}(Z_s)\transpose\, \mathrm{d}W_s,
\end{dcases}
\label{eq:example2}
\end{align}
where $\sigma>0, r, D$ are constants. One can easily check by It\^{o}'s formula that the $Y$ part of the solution of~\eqref{eq:example2} is 
\begin{equation*}
Y_t = e^{-r(T-t)}D\sum_{j=1}^d \sin(X_{j,t}).
\end{equation*}

Let $\xi=(\pi/2,\pi/2,\dots,\pi/2)$ ($100$-dimensional), $T=1, r=0.1, \sigma=0.3, D=0.1$. The initial guess of $Y_0$ is generated from a uniform distribution on the interval $[0, 1]$ while the true value of $Y_0\approx 9.04837$. We train 5000 steps with an exponential decay learning rate that decays every 100 steps, with the starting learning rate being 1e-2 and the ending learning rate being 1e-3. When $h=0.005~(N=200)$, the relative approximation error of $Y_0$ is $0.09\%$. Furthermore, we test the influence of the time partition by choosing different values of $N$. In all cases, the training converged, and we plot in Figure~\ref{fig:sin} the mean of relative error of $Y_0$ against the number of time steps $N$. It is clearly shown that the error decreases as $N$ increases ($h$ decreases).

\begin{figure}[ht]
\centering
\includegraphics[width = 9cm]{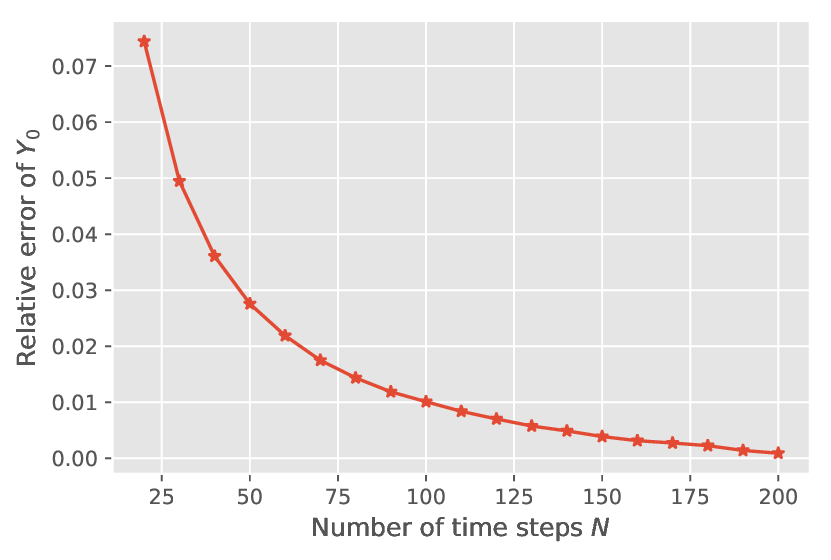}
\caption{Relative approximation error of $Y_0$
against the time step size $h$ in the case of Example 2 ($100$-dimensional).
The proposed deep BSDE method achieves a relative error of size $ 0.09\%$ when $N=200~(h=0.005)$.
\label{fig:sin}}
\end{figure}

\bibliographystyle{unsrt}
\bibliography{bib}
\end{document}